\documentclass[11pt]{amsart}
\usepackage{amsmath, amssymb, amsthm, bm, mathtools, enumitem, caption}
\usepackage{hyperref}

\usepackage[noabbrev,capitalize]{cleveref}
\usepackage{adjustbox}
\crefname{equation}{}{}
\usepackage{ytableau}
\usepackage{graphicx, tikz}
\usepackage[textheight=8.95in, textwidth=7in]{geometry}

\usepackage{microtype}

\usepackage{graphicx}
\usepackage{amsopn,amsfonts,stmaryrd}
\usepackage{verbatim}
\usepackage{amsthm}
\usepackage{mathtools}
\usepackage{color}
\usepackage{enumitem}
\usepackage[framemethod=TikZ]{mdframed}
\usepackage{bbm}
\usepackage{mathrsfs}
\usepackage{booktabs}
\usepackage{caption}
\usepackage{bm}
\usepackage{tensor}
\usepackage{cancel}

\usepackage{mathrsfs}
\usepackage[scr=boondox]{mathalfa}


\newtheorem{theorem}{Theorem}[section]
\newtheorem{lemma}[theorem]{Lemma}
\newtheorem{corollary}[theorem]{Corollary}

\newtheorem*{conjecture*}{Conjecture}

\theoremstyle{definition}

\theoremstyle{remark}
\newtheorem*{remark}{Remark}
\newtheorem*{remarks}{Remarks}
\newtheorem{example}[theorem]{Example}

\numberwithin{equation}{section}


\newcommand{\N}{\mathbb N}

\newcommand{\wt}[1]{\widetilde{#1}}





\newcommand{\z}{\zeta}







\newcommand{\ol}{\overline}

\newcommand{\Pmod}{\hspace{-0.25cm}\pmod}


\newcommand{\SL}{\mathrm{SL}}

\newcommand{\Z}{\mathbb Z}

\newcommand{\IC}{{\mathbb C}}
\newcommand{\IZ}{{\mathbb Z}}
\newcommand{\IN}{{\mathbb N}}
\newcommand{\IQ}{{\mathbb Q}}
\newcommand{\IH}{{\mathbb H}}

\newcommand{\QA}{\IQ\langle A\rangle}
\newcommand{\qsh}{\ast_\diamond}

\newcommand{\lrb}[1]{\left( #1 \right)}

\newcommand{\re}{{\rm Re}}
\newcommand{\im}{{\rm Im}}


\def\a{\alpha}

\def\d{\delta}

\def\l{\lambda}

\def\z{\zeta}

\def\s{\sigma}
\def\g{\gamma}
\def\t{\tau}

\newcommand{\CC}{\mathscr{C}}

\newcommand{\andd}{\quad \mbox{ and } \quad}
\newcommand{\where}{\quad \mbox{ where }}

\newcommand{\St}[1]{\left[ \begin{matrix} #1 \end{matrix} \right]}
\newcommand{\st}[1]{\left\{ \begin{matrix} #1 \end{matrix} \right\}}

\def\lp{\left(}
\def\rp{\right)}
\def\lb{\left[}
\def\rb{\right]}

\newcommand{\pmat}[1]{\left( \smallmatrix #1 \endsmallmatrix \right)}
\newcommand{\mat}[1]{\left( \begin{matrix} #1 \end{matrix} \right)}

\title[Quasimodularity and Limiting Behavior for Variations of MacMahon Series]{Quasimodularity and Limiting Behavior for Variations of MacMahon Series}
\date{\today}
\thanks{2020 {\it{Mathematics Subject Classification.}} 11F11; 11F37, 05A17, 11P81.}
\keywords{Eisenstein series, Quasimodular forms, Quasi-shuffle algebra, MacMahon-type $q$-series.}
\author{Caner Nazaroglu, Badri Vishal Pandey \and Ajit Singh}
\address{University of Cologne, Department of Mathematics and Computer Science,
	Weyertal 86-90,
	50931 Cologne, Germany}
\email{cnazarog@uni-koeln.de}
\email{bpandey@uni-koeln.de, badrivishal9451@gmail.com}
\address{Dept. of Mathematics \& Computing, Indian Institute of Technology (Indian school of Mines) Dhanbad, Jharkhand, India.}
\email{ajit94@iitism.ac.in}

\begin{document}
\begin{abstract}
Motivated by the 1920's seminal work of Major MacMahon, Amdeberhan--Andrews--Tauraso recently introduced an infinite family of $q$-series 
	\begin{align*}
		\mathcal{U}_{t}(a;q):= \sum_{1\le n_1<n_2<\cdots<n_t} \frac{q^{n_1+n_2+\cdots+n_t}}{(1+aq^{n_1}+q^{2n_1})(1+aq^{n_2}+q^{2n_2})\cdots (1+aq^{n_t}+q^{2n_t})}
	\end{align*}
and proved that these functions are linear combinations of quasimodular forms. In this paper, we study a broader family of $q$-series that contains the collection $\{\mathcal{U}_t\}_{t \in \IN}$. Using the theory of quasi shuffle algebras, we show that this extended family also lies in the algebra of quasimodular forms. Moreover, we determine the precise weights and levels of these functions, thereby making Amdeberhan--Andrews--Tauraso's result sharp.   We further investigate the limiting behavior of these functions. In particular, we demonstrate that the sequence of quasimodular forms~$\{\mathcal{U}_t(1;q)\}_{t\in\N}$ gives an approximation for the ordinary partition function. We also establish  infinitely many closed formulas for reciprocals of certain infinite products in terms of~$\mathcal{U}_{t}(a;q)$.
\end{abstract}

\maketitle
\section{Introduction and Statement of Results}
In exploring the relationship between integer partitions and divisor sums, 
MacMahon \cite{MacMahon} introduced an infinite family of $q$-series defined by
\begin{align}\label{eq:MacMahon}
	\mathcal{U}_{t}(q):=\sum_{n=0}^\infty\mathcal{M}(t;n)q^n= \sum_{1\le n_1<n_2<\cdots<n_t} \frac{q^{n_1+n_2+\cdots+n_t}}{(1-q^{n_1})^2(1-q^{n_2})^2\cdots (1-q^{n_t})^2},
\end{align}
where $\mathcal{M}(t;n)$ counts the sum of the products of the part multiplicities for partitions of $n$ with $t$ distinct part sizes. More precisely, recall that a partition $\lambda=(\lambda_1,\lambda_2,\ldots,\lambda_k)$ of $n$, denoted $\lambda\vdash n$, is a non-increasing sequence of positive integers that sum to $n$.
Considering the subset $\mathcal{P}_t (n)$ of partitions $\l$ of $n$ whose parts  assume exactly $t$ values, namely \smash{$1\leq n_{\l,1}<n_{\l,2}<\ldots<n_{\l,t}$}, with multiplicities \smash{$m_{\l,1},\ldots,m_{\l,t} \in \IN$} (so that \smash{$m_{\l,1} n_{\l,1} + \cdots + m_{\l,t} n_{\l,t} = n$}), we have  \smash{$\mathcal{M}(t;n)=\sum_{\lambda \in \mathcal{P}_t (n)} m_{\l,1} m_{\l,2} \cdots m_{\l,t}$}.
This partition generating function can also be recognized as a $q$-multiple zeta value and accordingly has links to fields such as enumerative geometry, representation theory, and topological string theory (see e.g.~\cite{BK16, HPHP15, Kreimer, Okounkov}). 
It further has ties to the theory of modular forms, since $\mathcal{U}_{t}(q)$ can be expressed as a linear combination of quasimodular forms on $\mathrm{SL}_2(\mathbb{Z})$ of weights up to $2t$ as shown by Andrews and Rose \cite{Andrews-Rose, rose}.

In recent years, research has increasingly examined questions of modularity and other related properties for various extensions of $\mathcal{U}_{t}(q)$ (see e.g.~\cite{AOS, AAT1,Bac24,Bringmann,  KMS}). The starting point of our paper is the related infinite family of $q$-series introduced very recently by Amdeberhan, Andrews, and Tauraso \cite{AAT2} as
\begin{align}\label{eq:AAT}
	\mathcal{U}_{t}(a;q):= \sum_{1\le n_1<n_2<\cdots<n_t} \frac{q^{n_1+n_2+\cdots+n_t}}{(1+aq^{n_1}+q^{2n_1})(1+aq^{n_2}+q^{2n_2})\cdots (1+aq^{n_t}+q^{2n_t})}.
\end{align}
This $q$-series includes many interesting objects as special cases. 
For example, for $t=1$, we have
\begin{align*}
	\mathcal{U}_1(a;q)&=\sum_{n\ge 1} \frac{q^n}{1+aq^n+q^{2n}}.
\end{align*}
If $a=0$, this $q$-series counts the representations of a number as the sum of two squares since
\begin{align*}
	\mathcal{U}_1(0;q) &= \sum_{n\ge 1} \frac{q^n}{1+q^{2n}} =
	\frac{\lrb{\sum_{n\in\Z} q^{n^2} }^2-1}{4}.
\end{align*}
When $a=1$, on the other hand, we have
\begin{align*}
	\mathcal{U}_1(1;q)&=\sum_{n\ge 1} \frac{q^n}{1+q^n+q^{2n}} = \frac{\sum_{n\in\Z} (-1)^n \, n \, q^{\frac{n(3n+1)}{2}}}{\sum_{n\in\Z} (-1)^n \, q^{\frac{n(3n+1)}{2}}},
\end{align*}
giving the first order Taylor coefficient with respect to the elliptic variable for the logarithm of a unary theta function.

We note that when $a=-2$, we get back MacMahon's original function $\mathcal{U}_t(q)$. Amdeberhan, Andrews, and Tauraso proved that $\mathcal{U}_{t}(a;q)$'s are mixed weight quasimodular forms for a congruence subgroup when $a=0,\pm1,2$. They further remarked \cite[Remark~8.1]{AAT2} that ``\emph{$\ldots$ we believe that each function $\mathcal{U}_{t}(a;q)$ $\mathrm{[}$for $a \in \{0,\pm 1\}$ $\!\! \mathrm{]}$ belongs to the space of quasimodular forms of weight $\le t$ for the congruence subgroup $\Gamma_0(24)$}". Here, as our first result, we prove that this indeed is the case. More precisely, for $t,k,r \in \IN$ we define
\begin{align}\label{eq:U-t-k-r}
	\mathcal{U}_{t,k,r}(a;q):= \sum_{1\le n_1<n_2<\cdots<n_t} \frac{q^{r(n_1+n_2+\cdots+n_t)}}{(1+aq^{n_1}+q^{2n_1})^k\cdots (1+aq^{n_t}+q^{2n_t})^k}=:\sum_{n=0}^\infty \mathcal{M}_{t,k,r}(a;n)q^n.
\end{align}
For convenience, we further define the shorthand
\begin{align}\label{eq:U-k-r}
	\mathcal{U}_{k,r}(a;q):=\mathcal{U}_{1,k,r}(a;q)=\sum_{n\ge 1} \frac{q^{rn}}{(1+aq^n+q^{2n})^k}.
\end{align}
\begin{theorem}\label{thm:weight-and-level-of-U}
	Assuming the notations above, we have the following:\footnote{Throughout, by a (quasi)modular form of level $N$, we mean a (quasi)modular form for the congruence subgroup~$\Gamma_0(N)$.}
	\begin{enumerate}[leftmargin=*]
		\item[(1)] $\mathcal{U}_{k,k}(a;q)$ is a mixed weight quasimodular form of highest weight $k$ and level $4, 3, 6$ for $a=0, 1, -1$, respectively, and highest weight $2k$ and level $2$ for $a=2$. Moreover, $\mathcal{U}_{k,k}(0;q)$ is modular when $k$ is odd.
		\item[(2)] $\mathcal{U}_{t,k,k}(a;q)$ is an isobaric polynomial of degree $t$ in the variables $\mathcal{U}_{k,k}(a;q), \mathcal{U}_{2k,2k}(a;q), \ldots, \mathcal{U}_{tk,tk}(a;q)$.\footnote{
An isobaric polynomial of degree $t$ in variables $X_1,\ldots,X_t$ has monomials $X_1^{a_1}X_2^{a_2}\cdots X_t^{a_t}$ satisfying 
$a_1+2a_2+\cdots +ta_t=t$.	
}
In particular, it is a mixed weight quasimodular form with highest weight $tk$ and level $4, 3, 6$ for $a=0, 1, -1$, respectively, and highest weight $2tk$ and level $2$ for $a=2$. 
	\end{enumerate}
\end{theorem}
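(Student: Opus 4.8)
The plan is to handle part (1) by expanding each summand as a $q$-power series and recognizing the result as a combination of Eisenstein series, and then to deduce part (2) by identifying $\mathcal{U}_{t,k,k}$ and the $\mathcal{U}_{jk,jk}$ as the elementary symmetric and power-sum functions of one common family.

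For part (1), set $f(n):=q^{kn}/(1+aq^n+q^{2n})^k$, so $\mathcal{U}_{k,k}(a;q)=\sum_{n\ge 1}f(n)$. The starting point is to factor the denominator: over $\IC$, $1+aq^n+q^{2n}=(1-\alpha q^n)(1-\bar\alpha q^n)$ with $\alpha$ a root of unity of exact order $N$, and in fact $1+aq^n+q^{2n}=\Phi_N(q^n)$ is the $N$-th cyclotomic polynomial evaluated at $q^n$ for $(a,N)=(0,4),(1,3),(-1,6)$, while $1+2q^n+q^{2n}=(1+q^n)^2=\Phi_2(q^n)^2$. In the cyclotomic cases, since $\Phi_N(x)\mid 1-x^N$ we may write $f(n)=R(q^n)^k/(1-q^{Nn})^k$ with $R(x):=x(1-x^N)/\Phi_N(x)$ a polynomial vanishing at $0$; expanding $(1-q^{Nn})^{-k}$ geometrically yields $f(n)=\sum_{\mu\ge k}C(\mu)\,q^{\mu n}$, where for all large $\mu$ the coefficient $C(\mu)$ is a quasi-polynomial in $\mu$ of degree $k-1$ and period $N$ — concretely a $\IQ$-linear combination of maps $\mu\mapsto\mu^s\psi(\mu)$ with $0\le s\le k-1$ and $\psi$ periodic modulo $N$ — and its top coefficient is nonzero because $R$ does not vanish at primitive $N$-th roots of unity. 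For $a=2$ one instead gets $f(n)=\sum_{\mu\ge k}(-1)^k(-1)^\mu p(\mu)\,q^{\mu n}$ with $p$ an odd polynomial of degree $2k-1$, since $(1+x)^{-2k}=\sum_m(-1)^m\binom{m+2k-1}{2k-1}x^m$.

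Summing over $n$ gives $\mathcal{U}_{k,k}(a;q)=\sum_{M\ge1}\bigl(\sum_{\mu\mid M}C(\mu)\bigr)q^M$. Decomposing $C$ along Dirichlet characters and residue-class indicators modulo $N$, each piece $\sum_M\bigl(\sum_{\mu\mid M}\mu^s\psi(\mu)\bigr)q^M$ is, up to its constant term, an Eisenstein series of weight $s+1\le k$ for $\Gamma_0(N)$, which is modular in every case except possibly $s+1=2$, where one may instead pick up an $E_2(N'\tau)$-type quasimodular term (for the ``odd-residue'' sums this weight-$2$ term is itself an honest element of $M_2(\Gamma_0(2))$). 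Hence $\mathcal{U}_{k,k}(a;q)$ is a mixed-weight quasimodular form for $\Gamma_0(N)$, and the nonzero leading term of $C$ (resp.\ of $p$) forces the highest weight $k$ (resp.\ $2k$) actually to occur. For the last claim: when $a=0$ and $k$ is odd, a short computation shows $C(\mu)=\pm\chi_{-4}(\mu)\cdot(\text{an even polynomial in }\mu\text{ of degree }k-1)$ — the key point being that for $N=4$ the sign $\prod_i\chi_{-4}(\mu_i)$ of any composition $\mu=\mu_1+\dots+\mu_k$ into odd parts is forced by $\mu$ alone — so only odd weights occur, all carrying the odd character $\chi_{-4}$, and $\mathcal{U}_{k,k}(0;q)$ is a combination of honest (modular) Eisenstein series of level $4$.

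For part (2), note that with the same $f(n)$, $\mathcal{U}_{t,k,k}(a;q)=\sum_{1\le n_1<\dots<n_t}f(n_1)\cdots f(n_t)$ is the $t$-th elementary symmetric function $e_t$ of the variables $f(1),f(2),\dots$ (the sum converges $q$-adically since $\operatorname{ord}_q f(n)=kn\to\infty$), while $\mathcal{U}_{jk,jk}(a;q)=\mathcal{U}_{1,jk,jk}(a;q)=\sum_{n\ge1}f(n)^j$ is the $j$-th power sum $p_j$ of the same variables. Newton's identities, valid in this completed polynomial ring, give
\[
\mathcal{U}_{t,k,k}(a;q)=\sum_{\lambda\vdash t}\frac{(-1)^{t-\ell(\lambda)}}{z_\lambda}\prod_i \mathcal{U}_{\lambda_i k,\,\lambda_i k}(a;q),
\]
which is exactly an isobaric polynomial of degree $t$ in $\mathcal{U}_{k,k},\mathcal{U}_{2k,2k},\dots,\mathcal{U}_{tk,tk}$ (each $\mathcal{U}_{jk,jk}$ carrying weight $j$, and $\sum_i\lambda_i=t$ for every $\lambda\vdash t$). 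By part (1) each factor is quasimodular for $\Gamma_0(N)$ of highest weight $\lambda_i k$ (resp.\ $2\lambda_i k$); since the ring of quasimodular forms is graded by weight and has no zero divisors, highest weight is additive under products, so every summand — and hence $\mathcal{U}_{t,k,k}(a;q)$ — is a mixed-weight quasimodular form for $\Gamma_0(N)$ of highest weight $tk$ (resp.\ $2tk$), the top component not cancelling for the same reason it is nonzero in part (1).

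The only genuinely delicate step is part (1), and within it the combinatorial bookkeeping that pins the level down to exactly $N$ rather than a proper multiple and isolates precisely which weights $\le k$ occur; this rests on the explicit form of the polynomial $R(x)=x(1-x^N)/\Phi_N(x)$ (equivalently, of the $N$-periodic functions $\psi$ attached to $1/\Phi_N(q^n)$). Granting part (1), part (2) is purely formal.
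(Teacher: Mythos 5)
Your part (2) is fine: identifying $\mathcal{U}_{t,k,k}$ as the elementary symmetric function and $\mathcal{U}_{jk,jk}$ as the power sums of the single family $f(n)=q^{kn}(1+aq^n+q^{2n})^{-k}$ and invoking Newton's identities is exactly equivalent to the paper's route (the quasi-shuffle identity of Hoffman--Ihara plus the cycle-index/exponential formula), and your partition formula agrees with the one the paper derives. The problem is in part (1), at the step that is actually the heart of the matter.

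You assert that after writing $\mathcal{U}_{k,k}(a;q)=\sum_{M\ge1}\bigl(\sum_{\mu\mid M}C(\mu)\bigr)q^M$ and decomposing the quasi-polynomial $C$ into pieces $\mu^s\psi(\mu)$ with $\psi$ a Dirichlet character or residue-class indicator mod $N$, ``each piece $\sum_M(\sum_{\mu\mid M}\mu^s\psi(\mu))q^M$ is, up to its constant term, an Eisenstein series of weight $s+1$ for $\Gamma_0(N)$, modular except possibly when $s+1=2$.'' That intermediate claim is false in general: such a Lambert-type series is a (quasi)modular Eisenstein series only when the parity matches, i.e.\ $\psi(-\mu)=(-1)^{s+1}\psi(\mu)$. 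Wrong-parity pieces taken alone --- e.g.\ $\sum_{m,n}\chi_{4,2}(m)\,m\,q^{mn}$ (weight $2$, odd character) or $\sum_{m\equiv1\ (4),\,n}m^{2}q^{mn}$ --- are not modular or quasimodular at all; indeed the paper's Remark (2) points out that exactly such wrong-parity series arise for the antisymmetric combinations $\mathcal{U}_{k,r}-\mathcal{U}_{k,2k-r}$ and lie outside the quasimodular world (they are quantum-modular objects). So the real content of part (1) for $a=\pm1$ (and for $a=0$ with $k$ even) is to prove that $C(\mu)$ contains only parity-correct pairs: in the paper this is the symmetry argument that $Q_k(x)=x^{2k}Q_k(1/x)$ (automatic here from the palindromy of $x^k/\Phi_N(x)^k$ under $x\mapsto1/x$) forces the partial-fraction coefficients at $\z_N$ and $\z_N^{-1}$ to satisfy $c'(\ell)=(-1)^\ell c(\ell)$, so that even weights pair with $\z_N^\mu+\z_N^{-\mu}$ and odd weights with $\z_N^\mu-\z_N^{-\mu}$, i.e.\ with $\chi_{3,2}$ or $\chi_{4,2}$. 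You carry out this sign bookkeeping only in the special case $a=0$, $k$ odd; without it the quasimodularity claim for $a=\pm1$ is not established by your argument. A secondary, smaller gap: you only claim $C(\mu)$ is quasi-polynomial ``for all large $\mu$''; if the description failed at finitely many $\mu$ you would pick up stray terms $c_\mu\,q^\mu/(1-q^\mu)$, which are not quasimodular, so one must check exactness for all $\mu\ge1$ (true here because $\deg R^k\le Nk$, and the analogue of the paper's observation that the extended binomial coefficients vanish at the new points), rather than leave it as a hedge.
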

\begin{example}\label{eg:1}
	We have
	\begin{align*}
		\mathcal{U}_{2,k,k}(a;q) &= \frac{1}{2}\mathcal{U}_{k,k}^2(a;q)-\frac{1}{2}\mathcal{U}_{2k,2k}(a;q), \\
		\mathcal{U}_{2,2}(0;q) &= G_{2}(2\tau)-4G_{2}(4\tau) - \frac{1}{8}, \\
		\mathcal{U}_{2,2}(2;q) &= -\frac{G_4(\tau )}{6}+\frac{8}{3} G_4(2 \tau )+\frac{G_2(\tau )}{6}-\frac{2}{3} G_2(2 \tau )-\frac{1}{32}, \\
		\mathcal{U}_{2,2}(1;q) &= -3G_2(3\tau)+\frac{G_2(\tau)}{3}-\frac{G_1(\chi_{3,2};\tau)}{3}-\frac{1}{18} ,\\
		 \mathcal{U}_{2,2}(-1;q) &= -12G_2(6\tau)+3G_2(3\tau)+\frac{4}{3}G_2(2\tau)-\frac{1}{3}G_2(\tau)+\frac{2}{3}G_1\lrb{\chi_{3,2};2\tau} + \frac{1}{3}G_1\lrb{\chi_{3,2};\tau}-\frac{1}{2},
	\end{align*}
	where $G_k(\tau)$ and $G_k(\chi;\tau)$ are Eisenstein series defined in Section~\ref{sec:prelim} and $\chi_{3,2}$ denotes the unique primitive Dirichlet character modulo $3$.
\end{example}

As a direct corollary we make the result of Amdeberhan, Andrews, and Tauraso \cite[Theorem~8.3]{AAT2} sharp by determining the precise levels of the quasimodular forms in consideration. More precisely, choosing $k=1$ in Theorem~\ref{thm:weight-and-level-of-U}(2) gives us the following corollary.
\begin{corollary}\label{Cor:weight-and-level-of-U}
The function $\mathcal{U}_t(a;q)$ is a linear combination of quasimodular forms of weight $\leq t$ on the group $\Gamma_0 (4)$ for $a=0$, $\Gamma_0 (3)$ for $a=1$, and  $\Gamma_0 (6)$ for $a=-1$.
\end{corollary}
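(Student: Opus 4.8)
The plan is to obtain this as an immediate specialization of Theorem~\ref{thm:weight-and-level-of-U} at $k=1$. The first step is the trivial observation that setting $k=r=1$ in the definition~\eqref{eq:U-t-k-r} recovers exactly the series~\eqref{eq:AAT}, so that $\mathcal{U}_{t,1,1}(a;q)=\mathcal{U}_t(a;q)$.

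Next I would invoke Theorem~\ref{thm:weight-and-level-of-U}(2) with $k=1$, which expresses $\mathcal{U}_t(a;q)$ as an isobaric polynomial of degree $t$ in the functions $\mathcal{U}_{1,1}(a;q),\mathcal{U}_{2,2}(a;q),\ldots,\mathcal{U}_{t,t}(a;q)$. By part~(1) of the same theorem, for $a\in\{0,1,-1\}$ each $\mathcal{U}_{j,j}(a;q)$ is a mixed weight quasimodular form of highest weight $j$ on $\Gamma_0(N)$ with $N=4,3,6$ respectively. Since the quasimodular forms for a fixed $\Gamma_0(N)$ form a ring, the highest weight of a product of such forms is at most the sum of the highest weights of the factors; hence every monomial $\mathcal{U}_{1,1}^{a_1}(a;q)\cdots\mathcal{U}_{t,t}^{a_t}(a;q)$ occurring in the polynomial is a quasimodular form on $\Gamma_0(N)$ of highest weight at most $1\cdot a_1+2\cdot a_2+\cdots+t\cdot a_t=t$, where the final equality is precisely the isobaric condition. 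Taking the resulting linear combination over all such monomials shows that $\mathcal{U}_t(a;q)$ is a linear combination of quasimodular forms of weight $\le t$ on $\Gamma_0(4)$, $\Gamma_0(3)$, $\Gamma_0(6)$ for $a=0,1,-1$ respectively, which is the assertion.

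As this is a direct corollary, there is no genuine obstacle; the only point worth flagging is that the isobaric grading of the polynomial coincides with the weight grading exactly because of the choice $k=1$ (for general $k$ one would only obtain weight $\le tk$), and that the transfer of both the weight bound and the level from the generators $\mathcal{U}_{j,j}(a;q)$ to the polynomial relies on the ring structure of the space of quasimodular forms for $\Gamma_0(N)$ — namely closure under products together with additivity of weights under multiplication.
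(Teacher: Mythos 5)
Your proposal is correct and follows exactly the paper's route: the paper also obtains the corollary by setting $k=1$ in Theorem~\ref{thm:weight-and-level-of-U}(2), identifying $\mathcal{U}_{t,1,1}(a;q)=\mathcal{U}_t(a;q)$ and using the isobaric decomposition into the quasimodular forms $\mathcal{U}_{j,j}(a;q)$ of highest weight $j$ and level $4,3,6$ for $a=0,1,-1$. Your remark that the isobaric grading matches the weight bound precisely because $k=1$ is a fair elaboration of what the theorem's ``in particular'' clause already encapsulates.
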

\begin{remarks}\hfill\break
(1) We note that as a special case we also recover \cite[Corollary~6.1]{AAT2}, which says that $\mathcal{U}_t(2;q)$ is a linear combination of quasimodular forms of weight $\leq 2t$ on the group $\Gamma_0 (2)$.\\
(2) Our results in Theorem~\ref{thm:weight-and-level-of-U} examine the case $r=k$, but our considerations have natural generalizations to the case $r \neq k$ as well. For example, in Theorems \ref{thm:Qk_quasipolynomial_a_1} and \ref{thm:Qk_quasipolynomial_a_m1}, we find that for $a = \pm 1$, the symmetric combinations $\mathcal{U}_{k,r}(a;q)+\mathcal{U}_{k,2k-r}(a;q)$ with $r \in \{1,\ldots,k\}$ also decompose to (quasi)modular forms of weight~$\leq k$. Furthermore, the methods we employ there also makes it clear that odd Eisenstein series and their higher level analogues (possibly with even weight) appear instead in the decomposition of the antisymmetric combination $\mathcal{U}_{k,r}(a;q)-\mathcal{U}_{k,2k-r}(a;q)$. It would be interesting to have a deeper examination of the appearance and applications of holomorphic quantum modularity in this context.\\
(3) The generating function of the MacMahon $q$-series $\mathcal{U}_{t}(-2;q) = \mathcal{U}_t(q)$ is elegantly related to Eisenstein series via the following identity \cite[Theorem~1.1]{Bac24}:
\begin{equation*}
1 + \sum_{t\geq 1} \mathcal{U}_{t} (q)  X^{2t}
=
\frac{2}{X} \arcsin\!\lp \frac{X}{2} \rp 
\exp \!\lp 2 \sum_{j \geq 1} \frac{(-1)^{j-1}}{(2j)!}
G_{2j} (\t) \lp 2 \arcsin \!\lp \frac{2}{X} \rp \rp^{2j}
\rp .
\end{equation*}
When the sum defining $\mathcal{U}_t$ is restricted through congruence conditions, one obtains similar identities involving generalized Eisenstein series with characters (see \cite[Theorem~3.4]{KMS}).
This generalization encompasses, for instance, the series $\mathcal{U}_{t}(2;q)$ as a special case.

Using the results and discussion of Theorem \ref{thm:weight-and-level-of-U} and Lemma \ref{lem:QS-U} in Section \ref{sec:quasimod_Utkk}, we computationally observe an analogous structure for  $\mathcal{U}_{t}(a;q)$ with $a \in \{0, \pm1\}$.
More specifically, our computations suggest the existence of power series $f_0(a;X)$ and $f(a;X)$ such that
\begin{equation*}
1+ \sum_{t\geq 1} \mathcal{U}_{t}(a;q)  X^{t} =  f_0(a;X) 
\exp\!\left(\sum_{j \geq 1} \frac{(-1)^{\lfloor\frac{j-1}{2}\rfloor}}{j!} \mathbb{G}_{j}(a;\t) f(a;X)^{j} \right),
\end{equation*}
where the functions $\mathbb{G}_{j}(a;\t)$ are defined as follows:
\begin{align*}
\mathbb{G}_{j}(1;\t) &:=
\begin{cases}
3^j G_j(3\tau) - G_j(\tau)
\quad &\mbox{if } 2 \mid j, \\
\sqrt{3} \, G_j (\chi_{3,2};\t)
\quad &\mbox{if } 2 \nmid j,
\end{cases}
\\
\mathbb{G}_{j}(0;\t) &:=
\begin{cases}
4^j G_j(4\tau) - 2^j G_j(2\tau)
\quad &\mbox{if } 2 \mid j, \\
2 \, G_j(\chi_{4,2};\tau)
\quad &\mbox{if } 2 \nmid j,
\end{cases}
\\
\mathbb{G}_{j}(-1;\t) &:=
\begin{cases}
6^j G_j(6\tau) - 3^j G_j(3\tau) - 2^j G_j(2\tau) + G_j(\tau)
\quad &\mbox{if } 2 \mid j, \\
\sqrt{3} \lp 2^j G_j(\chi_{3,2};2\tau) + G_j(\chi_{3,2};\tau) \rp
\quad &\mbox{if } 2 \nmid j.
\end{cases}
\end{align*}

\noindent(4) By Theorem \ref{thm:weight-and-level-of-U} and the fact that $G_2(\tau)$ is a modular form modulo any prime $p$, the functions $\mathcal{U}_{t,k,k}(a;q)$ are modular forms modulo $p$. Hence, using Serre's theory of $p$-adic modular forms \cite{Ser76}, there are infinitely many non-nested arithmetic progressions (depending on $p$) such that
the coefficients of $\mathcal{U}_{t,k,k}(a;q)$ on these
arithmetic progressions satisfy congruences modulo $p$. Computational experiments suggest the following: 
\begin{align*}
&\mathcal{M}_{3m+1\pm 1,2,2}(1;3n+2)\equiv 0\Pmod 3,\quad \mathcal{M}_{3m+1\pm 1,2,2}(-2;3n+2)\equiv 0\Pmod 3,\quad m\in \N,\\
&\mathcal{M}_{5 ,2,2}(-2;3n+2)\equiv 0\Pmod 3,\quad\mathcal{M}_{2,1,1}(\pm 1;4n+1)\equiv 0\Pmod 4,\quad\mathcal{M}_{2,1,1}( 1;8n+5)\equiv 0\Pmod 8.
\end{align*}
Computational data also suggest congruences for $k\neq r$, where $\mathcal{U}_{t,k,r}(a;q)$ is not quasimodular. For example,
 $$\mathcal{M}_{1,3,1}(-2;9n+4)\equiv 0\Pmod 3, \quad \mathcal{M}_{1,3,1}( -2;7n+2)\equiv 0\Pmod 7,\quad \mathcal{M}_{1,3,1}( -2;8n+4)\equiv 0\Pmod 7.$$
This makes the study of congruences for $\mathcal{U}_{t,k,r}(a;q)$ an interesting test case for examining congruences within more novel forms of modularity.
\end{remarks}
Another interesting direction to pursue for the functions we study here is their limiting behavior in $t$.
Recently in \cite{AOS}, Amdeberhan, Ono, and the third author studied this question for the family of quasimodular forms $\mathcal{U}_t(q)$. They showed that the sequence $\{\mathcal{U}_t(q)\}_{t\in\N}$ converges to the generating function of the three colored partitions, i.e.  if $t \in \IN$, then one has\footnote{
Here $(a;q)_\infty:=\prod_{n=0}^\infty(1-aq^n)$ is the $q$-Pochhammer symbol for any two complex numbers $a,q$ with $|q|<1$.
}
\begin{align}\label{AOS_limit}
q^{-\frac{t(t+1)}{2}} \, \mathcal{U}_t(q)=\frac{1}{(q;q)_\infty^3}+O(q^{t+1}).
\end{align}
Moreover, Ono\footnote{The question was raised in a research talk in the ``Partition theory, $q$-series and related Topics" seminar at Michigan Tech.} suggested that there should exist natural families of quasimodular forms that, in the limit, give $k$-colored partition generating function.
This question was resolved by Bringmann, Craig, van Ittersum, and the second author in  \cite{Bringmann}.
For each $k$, they gave two families of MacMahon-like $q$-series which give the $k$-colored partition generating function in the limit. They further showed that one family is quasimodular whereas the other is mock modular with an explicit, real analytic modular completion.
Correspondingly, it is natural to ask whether the generalization of the MacMahon function here, also quasimodular in special cases, has an interesting limiting behavior. As our next result, we show that this is indeed the case.
\begin{theorem}\label{thm:lim-behaviour}
	For $t,k,r\in\N$, we have
	\begin{align*}
		q^{-\frac{rt(t+1)}{2}}\mathcal{U}_{t,k,r}(a;q)=\prod_{n\geq 1}\frac{1}{(1-q^{nr})(1+aq^n+q^{2n})^{k}}+O(q^{t+1}).
	\end{align*}
\end{theorem}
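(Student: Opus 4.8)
The plan is to reduce the statement to an identity between power-series coefficients and then prove that identity by a weight-preserving bijection. Write $(1+ax+x^{2})^{-k}=\sum_{j\geq 0}c_{j}\,x^{j}$, a formal power series with $c_{0}=1$ and $c_{j}\in\Z[a]$, so that $(1+aq^{n}+q^{2n})^{-k}=\sum_{j\geq 0}c_{j}q^{nj}$. Expanding each denominator factor in the definition of $\mathcal{U}_{t,k,r}(a;q)$ gives
\[
\mathcal{U}_{t,k,r}(a;q)=\sum_{1\leq n_{1}<\cdots<n_{t}}\ \sum_{j_{1},\ldots,j_{t}\geq 0}c_{j_{1}}\cdots c_{j_{t}}\,q^{\sum_{i}(r+j_{i})n_{i}},
\]
so $\mathcal{M}_{t,k,r}(a;N)$ is the sum of $c_{j_{1}}\cdots c_{j_{t}}$ over all pairs of tuples $(n_{\bullet},j_{\bullet})$ with $\sum_{i}(r+j_{i})n_{i}=N$. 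Since $n_{i}\geq i$, the smallest exponent appearing is $\tfrac{rt(t+1)}{2}$, attained only at $n_{i}=i$, $j_{i}=0$; hence the theorem is equivalent to
\[
\mathcal{M}_{t,k,r}\!\left(a;\tfrac{rt(t+1)}{2}+m\right)=[q^{m}]\prod_{n\geq 1}\frac{1}{(1-q^{nr})(1+aq^{n}+q^{2n})^{k}}\qquad(0\leq m\leq t).
\]
Expanding the product, $\prod_{n\geq 1}(1-q^{nr})^{-1}=\sum_{\mu}q^{r|\mu|}$ over all partitions $\mu$, while $\prod_{n\geq 1}(1+aq^{n}+q^{2n})^{-k}=\sum_{(j_{n})}\big(\prod_{n}c_{j_{n}}\big)q^{\sum_{n}nj_{n}}$ over finitely supported $(j_{n})_{n\geq 1}$; so the right-hand side above equals $\sum c_{j_{1}}c_{j_{2}}\cdots$ over pairs $(\mu,(j_{n})_{n\geq 1})$ with $r|\mu|+\sum_{n}nj_{n}=m$.

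To build the bijection, fix $m$ with $0\le m\le t$. Given $1\leq n_{1}<\cdots<n_{t}$ and $j_{1},\ldots,j_{t}\geq 0$, set $\ell_{i}:=n_{i}-i$, so that $0\leq\ell_{1}\leq\cdots\leq\ell_{t}$ and $n_{i}=i+\ell_{i}$ is strictly increasing, and the constraint $\sum_{i}(r+j_{i})n_{i}=\tfrac{rt(t+1)}{2}+m$ becomes $r\sum_{i}\ell_{i}+\sum_{i}j_{i}(i+\ell_{i})=m$. To this data associate $(\mu,(j'_{n})_{n\geq 1})$, where $\mu$ is the partition whose parts are the nonzero $\ell_{i}$ and $j'_{n}:=j_{i}$ when $n=n_{i}$ for some $i$ (and $j'_{n}:=0$ otherwise); this is well defined because the $n_{i}$ are distinct, it satisfies $r|\mu|+\sum_{n}nj'_{n}=r\sum_{i}\ell_{i}+\sum_{i}(i+\ell_{i})j_{i}=m$, and $\prod_{n}c_{j'_{n}}=\prod_{i}c_{j_{i}}$ since $c_{0}=1$. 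Conversely, given $(\mu,(j_{n}))$ with $r|\mu|+\sum_{n}nj_{n}=m$, the inequality $r|\mu|\leq m\leq t$ with $r\geq 1$ forces $|\mu|\leq t$, so $\mu$ has at most $t$ parts; writing $s$ for their number, let $\ell$ be the weakly increasing $t$-tuple with $\ell_{i}=0$ for $i\leq t-s$ and $\ell_{t-s+1},\ldots,\ell_{t}$ equal to the parts of $\mu$ in increasing order, set $n_{i}:=i+\ell_{i}$, and $j'_{i}:=j_{n_{i}}$.

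The crux of the argument — and the one place where the hypothesis $m\leq t$ is used — is that this inverse assignment preserves the exponent and the weight; both reduce to the claim that every $n$ with $j_{n}\neq 0$ occurs among $n_{1},\ldots,n_{t}$. Since $\ell_{i}=0$, hence $n_{i}=i$, for $i\leq t-s$, the set $\{n_{1},\ldots,n_{t}\}$ contains $\{1,\ldots,t-s\}$, so it is enough to show that $j_{n}\neq 0$ implies $n\leq t-s$. If $j_{n}\neq 0$ then $n\leq\sum_{n'}n'j_{n'}\leq m$; and if in addition $n\geq t-s+1$, then, using $|\mu|\geq s$,
\[
t\ \geq\ m\ =\ r|\mu|+\sum_{n'}n'j_{n'}\ \geq\ rs+(t-s+1),
\]
i.e.\ $(r-1)s+1\leq 0$, which is impossible for $r\geq 1$ (and when $s=0$, $t-s+1=t+1>m$ already gives a contradiction). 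Consequently $\sum_{i}n_{i}j_{n_{i}}=\sum_{n}nj_{n}$ and $\prod_{i}c_{j_{n_{i}}}=\prod_{n}c_{j_{n}}$, as required, and a routine check confirms that the two assignments are mutually inverse (one recovers $\ell$ from $\mu$ as above, then the $n_{i}$ from $\ell$, then the $j_{i}=j'_{n_{i}}$). Summing the weights over these matched sets yields the coefficient identity above for all $0\leq m\leq t$, which is exactly the theorem. The rest is bookkeeping, and all power-series manipulations are legitimate because, for each fixed $m$, only finitely many terms contribute.
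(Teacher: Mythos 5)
Your proof is correct. It does, however, take a genuinely different route from the paper. The paper proves a more general statement (its Theorem \ref{General_limit}, for arbitrary polynomials $P\in x\,\N_0[x]$, $Q\in x\,\Z[x]$ and $k\in\Z$): it multiplies $\mathcal{U}_{t,k}(P,Q;q)$ by the target infinite product, performs the shift $n_j\mapsto n_j+j$, rewrites the resulting sum as a sum over partitions with at most $t$ parts, and then argues asymptotically, replacing the generic denominator factors by $1+O(q^{t-\ell+1})$ and checking that all error terms land beyond order $q^{t+1}$; the case $m\geq 2$ of that theorem needs an extra telescoping estimate that your argument never has to confront. You instead expand every factor $(1+aq^{n}+q^{2n})^{-k}$ into a formal power series with coefficients $c_j$ (with $c_0=1$), reduce the statement to an exact identity of coefficients of $q^{m}$ for $0\le m\le t$, and prove that identity by an explicit weight-preserving bijection between the index sets on the two sides; the heart of your argument, correctly identified and correctly proved, is that for $m\le t$ every index $n$ with $j_n\neq 0$ is forced to lie in $\{1,\dots,t-s\}\subseteq\{n_1,\dots,n_t\}$, which is exactly where the hypothesis $m\le t$ (and $r\ge 1$) enters. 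Both proofs share the same kernel, namely the substitution $n_i=i+\ell_i$ converting the strictly increasing tuple into a partition with at most $t$ parts, but your version is purely combinatorial and matches terms exactly, while the paper's version trades that explicitness for the greater generality of Theorem \ref{General_limit} (which it then specializes via $P(x)=rx$, $Q(x)=ax+x^{2}$). All the formal-power-series manipulations you invoke are legitimate for the reason you state: at each fixed exponent only finitely many terms contribute.
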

This result is established in Section \ref{sec:limiting_Utkr} by finding the limiting behavior for a more general family of functions that contains the functions $\mathcal{U}_{t,k,r}(a;q)$ as special instances.
We should also remark that the above theorem establishes a connection between $\mathcal{U}_t(a;q)$ and many interesting partition functions. For example, when $a=-2$, we recover the result \eqref{AOS_limit}.  The cases $a=0,\pm1,2$, on the other hand, yield the limiting functions 
\begin{align*}
\prod_{n\geq 1}\frac{1}{(1-q^n)(1+q^{2n})} &:=\sum_{n\geq 0} a(n)q^n=1 + q + q^2 + 2q^3 + 3q^4 + 4q^5 + 5q^6 + 7q^7 + 10q^8 + \cdots,\\
\prod_{n\geq 1}\frac{1}{(1-q^{3n})} &:=\sum_{n\geq 0} b(n)q^n=1 + q^3 + 2q^6 + 3q^9 + 5q^{12} + 7q^{15} + 11q^{18} + 15q^{21} + 22q^{24}+ \cdots,\\
\prod_{n\geq 1}\frac{(1+q^{n})}{(1+q^{3n})(1-q^{n})} &:=\sum_{n\geq 0} c(n)q^n=1 + 2q + 4q^2 + 7q^3 + 12q^4 + 20q^5 + 32q^6 + 50q^7 +\cdots,\\
\prod_{n\geq 1}\frac{(1-q^n)}{(1-q^{2n})^2} &:=\sum_{n\geq 0} d(n)q^n=1 - q + q^2 - 2q^3 + 3q^4 - 4q^5 + 5q^6 - 7q^7 + 10q^8 - 13q^9 + \cdots.
\end{align*}
Here we note that $a(n)$ counts the number of partitions of $n$ in which all odd parts are distinct and there is no restriction on the even parts, whereas $b(n)$ counts the number of partitions of $n$ into parts multiple of $3$. The numbers $c(n)$, on the other hand, count the number of partitions of $2n$, in which both odd parts and parts that are multiples of 3 occur with even multiplicities with no restriction on the remaining even parts. Finally, we note that the generating function of $d(n)$ is the
reciprocal of $\psi(q)$, Ramanujan's theta function \cite{Ram}, and $d(n)$'s are equal to $a(n)$ up to sign. Theorem~\ref{thm:lim-behaviour} then produces approximate generating functions for all of these partition-related numbers in terms of $\mathcal{U}_t (a;q)$.

There is also a great deal of information one can extract by focusing on the combinatorial aspects of the objects here. For example, in a recent paper, 
Ono and the third author \cite{OS} studied MacMahon's series from this lens and showed that $\mathcal{U}_t(q)$ satisfy infinitely many identities, which illustrate the ubiquity of the $\mathcal{M}(t; n)$ partition functions. More precisely, they proved that for any $t \in \IN$, we have
\begin{align}\label{eq:three-color-partition}
	\frac{1}{(q;q)_{\infty}^3}=q^{-\frac{t(t+1)}{2}}\sum_{m=t}^\infty\binom{2m+1}{m+t+1}\mathcal{U}_m(q),
\end{align}
which extends the limiting behavior in equation \eqref{AOS_limit} to an exact identity.
A recent preprint by Jin and two of the authors \cite{JPS} obtain further such results relating MacMahon and MacMahon-type $q$-series with Rogers-Ramanujan identities, the theta series associated to the non-trivial character modulo $6$, and infinite families of restricted partition functions. 
In view of the above discussion, it is natural to ask whether Theorem~\ref{thm:lim-behaviour} hints similar explicit identities for each $t \in \IN$. We demonstrate that this is indeed the case, where the term \smash{$q^{-\frac{1}{2} t(t+1)}\, \mathcal{U}_{t}(a;q)$} from the limiting formula represents the first term in a closed formula involving $\mathcal{U}_{t+1}(a;q)$, $\mathcal{U}_{t+2}(a;q)$, $\ldots$. 
\begin{theorem}\label{Explicit_Identities}
For any $t \in \IN$ we have 
\begin{equation*}
\prod_{n\geq 1}\frac{1}{(1-q^n)(1+aq^n+q^{2n})}=q^{-\frac{t(t+1)}{2}}
\sum_{m=t}^\infty \mathcal{U}_m(a;q)
\sum_{\g = 0}^{m-t} \binom{m}{\g} \binom{m-\g}{\lfloor \frac{m-\g-t}{2} \rfloor}
(-a)^{\g} .
\end{equation*}
\end{theorem}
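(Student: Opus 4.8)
The plan is to prove the identity by establishing a suitable generating-function relation in an auxiliary variable $X$ and then extracting the coefficient of $X^t$. First I would rewrite the left-hand side product using the factorization $1 - q^{3n} = (1-q^n)(1+q^n+q^{2n})$ in the case $a=1$, but more usefully observe that for general $a$ the factor $1+aq^n+q^{2n}$ does not factor, so instead I would work with the two-variable generating function
\begin{equation*}
\Phi(a;X;q) := 1 + \sum_{t \geq 1} \mathcal{U}_t(a;q)\, X^t = \prod_{n \geq 1}\left(1 + \frac{X q^n}{1+aq^n+q^{2n}}\right),
\end{equation*}
which follows immediately from expanding the product over $n$ and matching with the definition \eqref{eq:AAT}: choosing the term $\frac{Xq^{n_i}}{1+aq^{n_i}+q^{2n_i}}$ from the factors indexed by $n_1 < \cdots < n_t$ and $1$ from all others gives exactly $\mathcal{U}_t(a;q)X^t$. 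This is the key structural input and should be verified carefully first.

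Next I would manipulate the summand. Writing $1+aq^n+q^{2n} = (1-q^n)(1+q^{2n}) + (a+1)q^n + \cdots$ is not clean; instead the right move is the partial-fraction-type step
\begin{equation*}
1 + \frac{Xq^n}{1+aq^n+q^{2n}} = \frac{1 + (a+X)q^n + q^{2n}}{1+aq^n+q^{2n}}.
\end{equation*}
Now I want the numerator to factor compatibly with a telescoping product. The target product $\prod_{n\geq1}\frac{1}{(1-q^n)(1+aq^n+q^{2n})}$ suggests substituting a specific value of $X$, but since $t$ is arbitrary, the real strategy must be: expand $\prod_{n\geq1}\frac{1+(a+X)q^n+q^{2n}}{1+aq^n+q^{2n}}$ as a power series in $X$ in a second way. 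Shifting $a \mapsto a+X$ in the full generating function $\Phi$ evaluated at $X=1$ does not obviously help; rather I expect one uses the elementary expansion
\begin{equation*}
\prod_{n\geq 1}\bigl(1 + (a+X)q^n + q^{2n}\bigr) = \sum_{j \geq 0} (a+X)^j\, e_j(q^n + q^{-n}\text{-type terms}),
\end{equation*}
combined with $\prod_{n\geq1}(1+aq^n+q^{2n})^{-1}$, so that the coefficient of $X^t$ becomes a convolution; comparing with the known limiting statement (Theorem~\ref{thm:lim-behaviour} with $k=r=1$), which identifies the constant term in $q$ of $q^{-t(t+1)/2}\mathcal{U}_t(a;q)$, one pins down the binomial-times-$(-a)^\gamma$ coefficients via a Vandermonde/Chu identity. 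The binomial $\binom{m-\gamma}{\lfloor (m-\gamma-t)/2\rfloor}$ with the floor strongly signals that the underlying identity is the classical expansion of $\prod(1+xq^n+q^{2n})$ in Gaussian-binomial form (a Jacobi-triple-product/Cauchy-type identity), whose coefficients are central binomial coefficients.

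The cleanest route, and the one I would actually pursue, is to prove the equivalent finite identity: for all $m \geq t$,
\begin{equation*}
[X^m]\left( \Phi(a;X;q) \cdot \prod_{n\geq 1}\frac{1}{1-q^n} \cdot (\text{correction}) \right)
\end{equation*}
matches, but more honestly, I would recognize that Theorem~\ref{Explicit_Identities} is the $a$-deformation of \eqref{eq:three-color-partition} and mirror the proof in \cite{OS}: set up the generating function $\sum_t \bigl(q^{-t(t+1)/2}\mathcal{U}_t(a;q)\bigr) Y^t$ or rather the inverse relation expressing the product in terms of the $\mathcal{U}_m$, then invoke a binomial inversion. Concretely, I would show
\begin{equation*}
\sum_{m \geq t} \mathcal{U}_m(a;q)\sum_{\gamma=0}^{m-t}\binom{m}{\gamma}\binom{m-\gamma}{\lfloor\frac{m-\gamma-t}{2}\rfloor}(-a)^\gamma = [X^t \text{-type extraction from}]\ \Phi\bigl(a; g(X); q\bigr)
\end{equation*}
for an explicit substitution $g(X)$ chosen so that $1 + \frac{g(X)q^n}{1+aq^n+q^{2n}}$ telescopes against $\frac{1}{1-q^n}$; the generating function of $\binom{m-\gamma}{\lfloor(m-\gamma-t)/2\rfloor}$ is $\sum_j \binom{j}{\lfloor (j-t)/2\rfloor} z^j = z^t/((1-z)\sqrt{\,\cdots})$-style, i.e.\ related to $(1+z)/\sqrt{1-2z-3z^2}$ type generating functions, and the $(-a)^\gamma\binom{m}{\gamma}$ performs the shift $a \mapsto$ something. \textbf{The main obstacle} I anticipate is identifying the correct substitution $g(X)$ and massaging the resulting power series so the infinite product over $n$ genuinely telescopes to $\prod 1/((1-q^n)(1+aq^n+q^{2n}))$ with no leftover $q$-dependent factor; once the generating-function identity in $X$ is set up correctly, extracting $[X^t]$ and identifying the coefficients with the stated double sum should be a routine (if fiddly) application of the Vandermonde–Chu identity and the generating function for central binomial coefficients, together with the bookkeeping of the $q^{-t(t+1)/2}$ normalization coming from $\sum_{n_1<\cdots<n_t} q^{n_1+\cdots+n_t}$ having lowest term $q^{t(t+1)/2}$.
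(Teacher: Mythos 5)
Your opening step is exactly the paper's: the generating function $1+\sum_{t\geq 1}\mathcal{U}_t(a;q)X^t=\prod_{n\geq 1}\bigl(1+\frac{Xq^n}{1+aq^n+q^{2n}}\bigr)$ (this is Theorem 2.1 of \cite{AAT2}, quoted as \eqref{Gen} with $X=x^2$), and the rewriting of each factor as $\frac{1+(a+X)q^n+q^{2n}}{1+aq^n+q^{2n}}$ is the correct next move. But the argument then stalls at precisely the point you flag as ``the main obstacle,'' and that obstacle is the entire content of the proof. The key idea is not a power-series substitution $g(X)$ nor an extraction of the coefficient of $X^t$: one introduces a new variable $z$ via $z+z^{-1}=X+a$, so that the numerator factors, $1+(a+X)q^n+q^{2n}=(1+zq^n)(1+z^{-1}q^n)$, and the Jacobi triple product \eqref{JCT} converts $(q;q)_\infty\prod_{n\geq1}(1+zq^n)(1+z^{-1}q^n)$ into $\frac{1}{1+z^{-1}}\sum_{t\in\Z}q^{\frac{t(t+1)}{2}}z^t$. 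This yields the exact identity
\begin{equation*}
\frac{\sum_{t\in\Z}q^{\frac{t(t+1)}{2}}z^t}{\prod_{n\geq1}(1-q^n)(1+aq^n+q^{2n})}
=(1+z^{-1})\sum_{m\geq 0}\mathcal{U}_m(a;q)\,(z+z^{-1}-a)^m ,
\end{equation*}
and the theorem follows by folding the theta series via $t\mapsto -t-1$, expanding $(z+z^{-1}-a)^m$ binomially, and using the elementary identity $(1+z^{-1})(z+z^{-1})^{\g}=\sum_{\a=0}^{\g}\binom{\g}{\lfloor\frac{\g-\a}{2}\rfloor}(z^{\a}+z^{-\a-1})$ before comparing Laurent coefficients in $z$ for each fixed $t$. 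That last identity is the sole source of the floor binomial; no Vandermonde--Chu argument, no binomial inversion mirroring \cite{OS}, and no input from Theorem \ref{thm:lim-behaviour} is needed.

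So the gap is concrete: you never identify the substitution $X=z+z^{-1}-a$ that makes the numerator factor and brings the Jacobi triple product into play, and you aim to compare coefficients in the wrong variable ($X^t$ rather than $z^t+z^{-t-1}$ for a fixed $t$ — note the change of variable is not a power-series substitution in $X$, so coefficient extraction in $X$ does not directly make sense here). The alternative routes you sketch (Gaussian-binomial expansions, generating functions of central binomial coefficients, inversion of \eqref{eq:three-color-partition}) are left unexecuted, and as written the proposal does not constitute a proof. The good news is that everything after the missing substitution is, as you predicted, routine bookkeeping.
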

\begin{remark}
	Substituting \( a = -2 \) into Theorem~\ref{Explicit_Identities} and comparing it with \eqref{eq:three-color-partition} (recalling that \( \mathcal{U}_m(q) = \mathcal{U}_m(-2;q) \)) immediately yields the identity (for \( t, m \in \mathbb{N} \) with \( m \ge t \)):
	\begin{align*}
		\binom{2m+1}{m+t+1} = \sum_{\g = 0}^{m-t} \binom{m}{\g} \binom{m-\g}{\lfloor \frac{m-\g-t}{2} \rfloor} 2^{\g}.
	\end{align*}
	This identity can also be derived directly by expanding \( (1+z)(z+z^{-1}+2)^m \) in two different ways.
\end{remark}

The organization of the paper is as follows. In Section \ref{sec:prelim}, we recall some preliminary notions and results on modular and quasimodular forms, Eulerian polynomials, and quasi-shuffle algebras.
In Section \ref{sec:quasimod_Utkk}, we employ these ideas to prove Theorem \ref{thm:weight-and-level-of-U}. In particular, we use quasi-shuffle algebras in order to obtain the decomposition of $U_{t,k,k}(a;q)$ into $U_{k,k}(a;q), U_{2k,2k}(a;q), \ldots, U_{tk,tk}(a;q)$. Then we prove the modularity properties of $U_{k,k}(a;q)$ for $a=0,\pm 1,2$ by finding an explicit decomposition to Eisenstein series. Our approach to the $a = \pm 1$ case emphasizes the utility of Eulerian polynomials and Stirling numbers in establishing such decompositions.
Next in Section \ref{sec:limiting_Utkr}, we examine the limiting behavior of $\mathcal{U}_{t,k,r}(a;q)$ in $t$ and prove Theorem \ref{thm:lim-behaviour}.
Finally, in Section \ref{sec:explicit_identity}, we prove the explicit identities from Theorem \ref{Explicit_Identities}.

\section*{Acknowledgments}
The first author is supported by the SFB/TRR 191 “Symplectic Structure in Geometry, Algebra and Dynamics”, funded by the DFG (Projektnummer 281071066 TRR 191).
The second author is funded by the European Research Council (ERC) under the European Union’s Horizon 2020 research and innovation programme (grant agreement No. 101001179). The third author is grateful for the support of a Fulbright Nehru Postdoctoral Fellowship at the University of Virginia, USA, and is currently supported by the INSPIRE Faculty Fellowship at IIT (ISM) Dhanbad, India. The authors would like to thank Jan-Willem van Ittersum for enlightening discussions on quasi-shuffle algebras and Toshiki Matsusaka for bringing to our attention the utility of MacMahon-type $q$-series as generating functions of Eisenstein series. Finally, the authors thank the anonymous referees for their valuable comments and suggestions, which improved both the quality and the exposition of the paper.

\section{Preliminaries}\label{sec:prelim}

\subsection{Eisenstein series on congruence subgroups and quasimodular forms}
We start our discussion with a quick reminder on a family of Eisenstein series on congruence subgroups, where further details can be found in \cite{CS2017}.
Let $k \in \IZ$, $N \in \IN$ and assume that $\chi$ is a Dirichlet character modulo $N$ satisfying $\chi(-1)=(-1)^k$.
Then for $\t \in \IH$ (with $\IH$ denoting the complex upper half-plane) and $s \in \IC$ with $k+2\re(s)>0,$ we define the nonholomorphic Eisenstein series
\begin{align}\label{eq:Eisenstein-series-s}
\mathfrak{G}_k(\chi;\tau;s) &:= \frac{1}{2}
\sum_{\substack{(c,d)\in\IZ^2 \setminus \{(0,0)\} \\ N \mid c}} 
\frac{\ol{\chi(d)}}{(c\tau+d)^k}
\frac{\im (\t)^s}{|c\tau+d|^{2s}} .
\end{align}
For $k \geq 3$, we can set $s=0$ and obtain a holomorphic function of $\t$ that (up to an overall normalization) we denote by $G_k (\chi; \t)$. The resulting function is in $M_k (\Gamma_0 (N), \chi)$, the space of holomorphic modular forms of weight $k$ and character $\chi$ on $\Gamma_0 (N)$, which is the congruence subgroup of $\mathrm{SL}_2 (\IZ)$ defined by
\begin{equation*}
\Gamma_0 (N) := \left\{ \mat{a & b \\ c & d} \in \mathrm{SL}_2 (\IZ) :\ c \equiv 0 \, (\mathrm{mod}\,N) \right\}.
\end{equation*}
More specifically, a holomorphic function $g : \IH \to \IC$ is in $M_k (\Gamma_0 (N), \chi)$ if it satisfies the transformation
\begin{equation*}
g\!\lp \frac{a \t+b}{c\t+d} \rp = \chi (d) \, (c\t+d)^k \, g(\t)
\quad \mbox{for all } \mat{a & b \\ c & d} \in \Gamma _0 (N)
\end{equation*}
and if \smash{$(c\t+d)^{-k} g\!\lp \frac{a \t+b}{c\t+d} \rp$} is bounded as $\im (\t) \to \infty$ for all $\pmat{a & b \\ c & d} \in \SL_2 (\IZ)$.
We summarize these facts in the following lemma. Note that we restrict to primitive characters since they lead to simple Fourier expansions and the others can be expressed in terms of these.

\begin{lemma}\label{lem:eis-with-char-kge3}
Let $k,N \in \IN$, $k \geq 3$, and $\chi$ be a primitive Dirichlet character modulo $N$ with $\chi(-1)=(-1)^k$. Then we have\footnote{Here and throughout $q := e^{2 \pi i \t}$.}
\begin{align*}
G_k(\chi;\tau) &:= \frac{L(\chi,1-k)}{2} + \sum_{n \geq 1} \sigma_{k-1}(\chi;n) \, q^n
\, \in M_k(\Gamma_0(N),\chi),
\end{align*}
where \smash{$\sigma_{k-1} (\chi;n) := \sum_{d|n} \chi(d) \, d^{k-1}$}.
\end{lemma}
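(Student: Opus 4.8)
The plan is to realize $G_k(\chi;\tau)$ as an explicit scalar multiple of the holomorphic function $\mathfrak{G}_k(\chi;\tau;0)$ obtained from \eqref{eq:Eisenstein-series-s} by setting $s=0$, which is legitimate since for $k\geq 3$ the lattice sum converges absolutely at $s=0$. The modularity is then essentially formal: for $\gamma=\mat{a&b\\c&d}\in\Gamma_0(N)$ the map $(c',d')\mapsto(c',d')\gamma$ permutes the index set $\{(c',d')\in\IZ^2\setminus\{(0,0)\}:N\mid c'\}$, the cocycle identity turns $(c'(\gamma\tau)+d')$ into $(c\tau+d)^{-1}$ times the corresponding term for the new index, producing the automorphy factor $(c\tau+d)^k$, and the bottom entry of $(c',d')\gamma$ reduces modulo $N$ to $d'd$, so $\overline{\chi(\cdot)}$ contributes the factor $\chi(d)$ (using that $\chi$ is defined mod $N$ and $\gcd(d,N)=1$). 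Applying the same reindexing for a general $\gamma\in\SL_2(\IZ)$ together with the $q$-expansion below gives boundedness at every cusp, hence membership in $M_k(\Gamma_0(N),\chi)$.

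Next I would compute the Fourier expansion. Separate the $c'=0$ terms, whose contribution to $\mathfrak{G}_k(\chi;\tau;0)$ is $\tfrac12\sum_{d\neq 0}\overline{\chi(d)}d^{-k}=L(\overline\chi,k)$, where the hypothesis $\chi(-1)=(-1)^k$ (hence $(-1)^k\overline{\chi(-1)}=1$) ensures the $d<0$ terms reinforce rather than cancel the $d>0$ terms. For $c'\neq 0$, write $c'=Nc$, and use the same sign manipulation $(c'\tau+d)^{-k}=(-1)^k(-c'\tau-d)^{-k}$ together with $d\mapsto-d$ to reduce to $c\geq 1$, picking up an overall factor of $2$. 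Grouping the inner sum over $d$ by residue $r$ modulo $N$ and applying the Lipschitz summation formula $\sum_{\ell\in\IZ}(z+\ell)^{-k}=\tfrac{(-2\pi i)^k}{(k-1)!}\sum_{m\geq 1}m^{k-1}e^{2\pi imz}$ (valid since $\im(c\tau+r/N)>0$), the coefficient of $q^n$ becomes a fixed constant times $\sum_{mc=n}m^{k-1}\sum_{r\bmod N}\overline{\chi(r)}e^{2\pi imr/N}$. Evaluating the inner character sum as the Gauss sum $\chi(m)\,\tau(\overline\chi)$ — this is exactly where primitivity of $\chi$ enters — yields a constant multiple of $\sigma_{k-1}(\chi;n)$; choosing the normalization $G_k(\chi;\tau):=\tfrac{(k-1)!\,N^k}{(-2\pi i)^k\,\tau(\overline\chi)}\,\mathfrak{G}_k(\chi;\tau;0)$ makes that coefficient equal to $\sigma_{k-1}(\chi;n)$ on the nose.

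It then remains to identify the constant term $\tfrac{(k-1)!\,N^k}{2(-2\pi i)^k\,\tau(\overline\chi)}\,L(\overline\chi,k)$ with $\tfrac12 L(\chi,1-k)$, which I would do by invoking the functional equation of the Dirichlet $L$-function relating $L(\overline\chi,k)$ and $L(\chi,1-k)$ (equivalently, the formula $L(\chi,1-k)=-B_{k,\chi}/k$ for generalized Bernoulli numbers). I expect the main technical obstacle to be precisely this last step: matching all of the $\Gamma$-factors, powers of $2\pi i$, and Gauss-sum constants in the functional equation so that the stated normalization comes out exactly, together with the routine but necessary justification of the term-by-term rearrangements above via absolute convergence. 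The transformation law and the shape of the higher Fourier coefficients, by contrast, should follow directly from the lattice-sum manipulation and the Lipschitz formula.
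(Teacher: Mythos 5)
The paper does not actually prove this lemma: it introduces $\mathfrak{G}_k(\chi;\tau;s)$ in \eqref{eq:Eisenstein-series-s}, observes that for $k\geq 3$ one may set $s=0$, and then states the lemma as a summary of standard facts, referring to \cite{CS2017} for details. Your proposal supplies precisely that classical argument (absolute convergence at $s=0$, reindexing by $(c',d')\mapsto(c',d')\gamma$ for the transformation law and cusp conditions, Lipschitz summation for the Fourier expansion, the Gauss-sum evaluation $\sum_{r\bmod N}\overline{\chi(r)}e^{2\pi i mr/N}=\chi(m)\tau(\overline{\chi})$ where primitivity is used, and the functional equation for the constant term), so it is the right proof and consistent with the source the paper cites. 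One bookkeeping slip to fix: having already used the prefactor $\tfrac12$ when you computed the $c'=0$ contribution to $\mathfrak{G}_k(\chi;\tau;0)$ as $L(\overline{\chi},k)$, the constant term of your normalized series is $\tfrac{(k-1)!\,N^k}{(-2\pi i)^k\,\tau(\overline{\chi})}\,L(\overline{\chi},k)$, not that quantity divided by $2$; the identity you would then be checking in your last step is off by a factor of $2$ and would fail. The correct identity is exactly
\begin{equation*}
\frac{(k-1)!\,N^k}{(-2\pi i)^k\,\tau(\overline{\chi})}\,L(\overline{\chi},k)
=-\frac{B_{k,\chi}}{2k}=\frac{L(\chi,1-k)}{2},
\end{equation*}
which follows from the functional equation (equivalently $L(k,\overline{\chi})=(-1)^{k+1}\tfrac{\tau(\overline{\chi})}{2}\bigl(\tfrac{2\pi i}{N}\bigr)^{k}\tfrac{B_{k,\chi}}{k!}$ together with $L(\chi,1-k)=-B_{k,\chi}/k$), so with your own normalization the constant term comes out as stated and the step you flagged as the main obstacle resolves cleanly.
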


In the special case of the trivial character (with $N=1$), we find the classical Eisenstein series, which, for $k \geq 4$ even, are in $M_k(\mathrm{SL}_2(\Z))$, the space of holomorphic modular forms on $\SL_2 (\IZ)$.
In this case, we simplify the notation from Lemma \ref{lem:eis-with-char-kge3} by omitting $\chi$ to write \smash{$\sigma_{k-1}(n):=\sum_{d|n} d^{k-1}$} and
\begin{equation}\label{eq:Eis-k-even}
G_k(\tau) := \frac{\z (1-k)}{2} + \sum_{r\geq 1} \sigma_{k-1}(r) q^{r}
= -\frac{B_k}{2k} + \sum_{m,r\geq 1} m^{k-1} q^{mr}.
\end{equation}

These considerations can be extended to the cases $k=1$ and $k=2$ by noting that the non-holomorphic Eisenstein series $\mathfrak{G}_k(\chi;\tau;s)$ analytically continues in $s$ to
an entire function for $k \geq 1$.

\begin{lemma}\label{lem:eis-with-char-k=1,2}
	Let $\chi$ be a primitive Dirichlet character modulo $N \geq 2$. Then we have the following:
	\begin{enumerate}[leftmargin=*]
		\item[1)] If $\chi$ is an even character, that is $\chi(-1)=1$, then
		\begin{align*}
			G_2(\chi;\tau) := \frac{L(\chi,-1)}{2} + \sum_{n\ge1} \sigma_1(\chi;n) \, q^n \in M_2(\Gamma_0(N),\chi).
		\end{align*}
		\item[2)] If $\chi$ is an odd character, that is $\chi(-1)=-1$, then we have
		\begin{align*}
			G_1(\chi;\tau) := \frac{L(\chi,0)}{2} + \sum_{n\ge1} \sigma_0(\chi;n) \, q^n \in M_1(\Gamma_0(N),\chi).
		\end{align*}
	\end{enumerate}
\end{lemma}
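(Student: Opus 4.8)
The plan is to realize both series as the specialization at $s=0$ of the non-holomorphic Eisenstein series $\mathfrak{G}_k(\chi;\tau;s)$ of \eqref{eq:Eisenstein-series-s} (with $k=2$ in the even case and $k=1$ in the odd case), exactly as $G_k(\chi;\tau)$ was obtained for $k\geq 3$, and to show that for these low weights the value at $s=0$ is already \emph{holomorphic}, the point being that a primitive character modulo $N\geq 2$ is nontrivial. First I would record two things. The series $\mathfrak{G}_k(\chi;\tau;s)$ converges absolutely for $\re(s)$ large and, as noted just above the lemma, continues to an entire function of $s$ for every $k\geq 1$; and for each $s$ in the region of absolute convergence the lattice-sum definition gives directly, using $N\mid c$ and $\chi(-1)=(-1)^k$, the transformation $\mathfrak{G}_k(\chi;\gamma\tau;s)=\chi(d)\,(c\tau+d)^k\,\mathfrak{G}_k(\chi;\tau;s)$ for all $\gamma=\left(\begin{smallmatrix} a & b \\ c & d \end{smallmatrix}\right)\in\Gamma_0(N)$. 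By analytic continuation this identity persists at $s=0$, so the specialization transforms with weight $k$ and character $\chi$ on $\Gamma_0(N)$.

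The heart of the argument is the Fourier expansion of $\mathfrak{G}_k(\chi;\tau;s)$ near $s=0$. I would split the sum over $(c,d)$ with $N\mid c$ into the part with $c=0$ and the part with $c\neq 0$. Pairing $d$ with $-d$ and using $\chi(-1)=(-1)^k$, the $c=0$ part collapses to $\im(\tau)^s\,L(\overline{\chi},k+2s)$, which at $s=0$ contributes a constant. For the $c\neq 0$ part I would sort the inner sum over $d$ into residue classes modulo $N$ and apply Poisson summation in $d$; the $m$-th Fourier mode then carries the coefficient $\sum_{d\bmod N}\overline{\chi(d)}\,e^{2\pi i m d/N}$. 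For $m\neq 0$ this equals a value of $\overline{\chi}$ up to the Gauss sum, and the accompanying $d$-integral, evaluated at $s=0$ by the Lipschitz/Hurwitz formula, is exponentially small; summing over $c$ and $m$ assembles these contributions into $\sum_{n\geq 1}\sigma_{k-1}(\chi;n)\,q^n$. The decisive observation is the $m=0$ mode: its coefficient is $\sum_{d\bmod N}\overline{\chi(d)}=0$ because $\chi$ is nontrivial, so the would-be non-holomorphic term (the one proportional to a Gamma factor times $\im(\tau)^{1-k-2s}$, i.e.\ an $\im(\tau)^{-1}$-term when $k=2$) simply does not appear. Normalizing the $q$-coefficients to be exactly $\sigma_{k-1}(\chi;n)$ and converting the constant term $L(\overline{\chi},k)$ into $\tfrac12 L(\chi,1-k)$ via the functional equation of $L(\chi,s)$ — precisely as in Lemma~\ref{lem:eis-with-char-kge3} — gives the asserted $q$-expansions and shows holomorphy on $\IH$.

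It remains to verify the growth condition at every cusp, namely that $(c\tau+d)^{-k}\,G_k\!\left(\chi;\tfrac{a\tau+b}{c\tau+d}\right)$ is bounded as $\im(\tau)\to\infty$ for all $\left(\begin{smallmatrix} a & b \\ c & d \end{smallmatrix}\right)\in\SL_2(\IZ)$. Conjugating the lattice sum by an arbitrary element of $\SL_2(\IZ)$ produces another Eisenstein-type sum, whose Fourier expansion is obtained by the same Poisson computation and involves only non-negative powers of the local uniformizer, with everything of moderate growth uniformly for $s$ near $0$; specializing to $s=0$ yields boundedness at each cusp. Combined with the transformation law, this shows $G_2(\chi;\tau)\in M_2(\Gamma_0(N),\chi)$ for $\chi$ even and $G_1(\chi;\tau)\in M_1(\Gamma_0(N),\chi)$ for $\chi$ odd.

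I expect the main obstacle to be the bookkeeping in the Poisson step: carrying the Gamma factors and the Gauss sum $\tau(\overline{\chi})$ through correctly, dealing with the conditional (Eisenstein) summation when $k=1$ via the $s$-regularization — where the Gamma factor actually develops a pole at $s=0$, which is harmless only because it multiplies the identically vanishing coefficient $\sum_{d\bmod N}\overline{\chi(d)}$ — and matching the overall normalization to the one already fixed in Lemma~\ref{lem:eis-with-char-kge3} so that the constant term comes out as $\tfrac12 L(\chi,1-k)$. The conceptual input — nontriviality of $\chi$ kills the anomalous term — is immediate; all the work is in the explicit transform, which is classical (see e.g.\ \cite{CS2017}).
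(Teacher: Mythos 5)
Your proposal is correct and follows essentially the same route the paper takes: the paper states this lemma as a standard fact, justified exactly by the remark preceding it (analytic continuation of $\mathfrak{G}_k(\chi;\tau;s)$ to $s=0$, i.e.\ the Hecke trick) with details deferred to \cite{CS2017}, and your sketch fills in precisely those details, with the decisive point correctly identified as the vanishing of the $m=0$ Fourier mode because a primitive character modulo $N\geq 2$ is nontrivial. The only slips are cosmetic (e.g.\ the twisted character sum for $m\neq 0$ equals $\chi(m)\tau(\overline{\chi})$, a value of $\chi$ rather than $\overline{\chi}$, and the final normalization matching the constant term $\tfrac12 L(\chi,1-k)$ is exactly the ``up to an overall normalization'' the paper already allows), so nothing essential is missing.
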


When $\chi$ is trivial (so that $N=1$) and $k=2$, on the other hand, the analytic continuation to $s=0$ does not yield a holomorphic function of $\t$. Instead we find that
\begin{equation}\label{eq:G2_fourier_expansion}
G_2(\tau) := \frac{\z (-1)}{2} + \sum_{r\geq 1} \sigma (r) q^{r}
= -\frac{1}{24} + \sum_{m,r\geq 1} m \, q^{mr} \in \wt{M}_2(\mathrm{SL}_2(\Z)),
\end{equation}
where $\wt{M}_k(\mathrm{SL}_2(\Z))$ denotes the space of weight $k$ quasimodular forms on $\SL_2 (\IZ)$ (see \cite{BGRZ2008} for more details). In particular, the function $G_2(\tau)$ does not transform like a modular form but it instead satisfies
\begin{equation*}
G_2\!\left(\frac{a\tau+b}{c\tau+d}\right) = (c\tau+d)^2 \, G_2(\tau) 
+ \frac{i c (c\t+d)}{4\pi}
\quad \mbox{for all } \mat{a & b \\ c & d} \in \SL_2 (\IZ).
\end{equation*}
What instead transforms like a modular form is the non-holomorphic combination \smash{$G_2 (\t) + \frac{1}{8 \pi \im (\t)}$}. 
In general, we say that~$g: \IH \to \IC$ is a {\it quasimodular form of weight $k$, depth $h$, and with character $\chi$ on $\Gamma_0 (N)$} if there exist holomorphic functions $g_j:\mathbb{H}\to\mathbb{C}$ for $j\in\{0,\ldots,h\}$ with $g_h \neq 0$ such that\footnote{
Note that the given transformation law immediately implies that $g=g_0$ and that
(see e.g.~\cite{ChoieLee} or \cite{Royer})
\begin{equation*}
g_r\!\left(\frac{a\tau+b}{c\tau+d}\right) = \chi(d) (c\tau+d)^{k-2r} \sum_{j=0}^{h-r} \mat{r+j \\ r} 
g_{r+j} (\tau)\left(\frac{c}{c\tau+d}\right)^j \quad \mbox{for all }\begin{pmatrix}a&b\\c&d\end{pmatrix}\in\Gamma_0(N)
\mbox{ and } r \in \{0,\ldots,h\}.
\end{equation*}
}
\begin{align*}
g\!\left(\frac{a\tau+b}{c\tau+d}\right) = \chi(d) (c\tau+d)^{k} \sum_{j=0}^h g_j(\tau)\left(\frac{c}{c\tau+d}\right)^j \quad \mbox{for all }\begin{pmatrix}a&b\\c&d\end{pmatrix}\in\Gamma_0(N)
\end{align*}
and \smash{$(c\t+d)^{-(k-2r)} g_r\!\lp \frac{a \t+b}{c\t+d} \rp$} is bounded as $\im (\t) \to \infty$ for all $r \in \{0,\ldots,h\}$ and $\pmat{a & b \\ c & d} \in \SL_2 (\IZ)$.
The set of all such weight $k$ quasimodular forms together with the zero function constitutes a vector space, which we denote by $\wt{M}_k (\Gamma_0 (N), \chi)$.
Using the properties above and the transformation property of $G_2 (\t)$, one can easily deduce the structure theorem of quasimodular forms stating that a level $N$ quasimodular form as above is a polynomial in $G_2 (\t)$ with modular coefficients (for the character $\chi$ on $\Gamma_0 (N)$).

Finally, there are a number of natural operators defined on the space of 
(quasi)modular forms, such as Hecke operators, Atkin-Lehner operator, etc.
One such Atkin-Lehner operator is the operator $V_\ell$ for $\ell\in\N$, which acts on $f\in M_k(\Gamma_0(N),\chi)$ by
\begin{align}\label{eq:V-op}
	V_\ell(f) (\t) :=f(\ell\tau).
\end{align}
The function $V_\ell(f)$ is again a modular form and lies in $M_k(\Gamma_0(N\ell),\chi)$. An analogous conclusion also holds for quasimodular forms.

Finally, we find it convenient for our discussion to define
\begin{equation}\label{eq:Fk_definition}
F_k (\t) := \sum_{n \geq 1} \s_{k-1} (n) q^n =
\sum_{m,n\geq 1} m^{k-1} q^{mn}
\mbox{ and }
F_k (\chi; \t) := \sum_{n \geq 1} \s_{k-1} (\chi, n) q^n =
\sum_{m,n\geq 1} \chi(m) m^{k-1} q^{mn}
\end{equation}
for any $k \in \IN$ and Dirichlet character $\chi$. These are equal to $G_k (\t)$ and $G_k (\chi; \t)$, respectively, up to the addition of the constant Fourier coefficients (for appropriate $k$ and $\chi$ giving (quasi)modular forms).

\subsection{Eulerian polynomials and partial fractions} 
In parts of our proof of Theorem \ref{thm:weight-and-level-of-U}, we employ Eulerian polynomials to identify Eisenstein series (including those at higher levels) from the $q$-series under consideration. These polynomials were introduced by Euler through the relation
\begin{equation}\label{eq:Eulerian_polynomial_definition}
\sum_{m=1}^\infty m^{k-1} t^m = \frac{t \, P_{k-1} (t)}{(1-t)^k}
\quad \mbox{for } k \in \IN
\end{equation}
to study the zeta function at nonpositive integers as (with $\zeta (s,a)$ denoting the Hurwitz zeta function)\footnote{Here and throughout, we use $\z_N$ to denote $ e^{\frac{2\pi i}{N}}$.}${}^{,}$\footnote{
This follows from analytically continuing $\mathrm{Li}_s (\z_r^n) = r^{-s} \sum_{m=1}^r \z_r^{mn} \z (s,  \frac{m}{r})$ and noting $\mathrm{Li}_{1-\ell} (z) = \frac{z P_{\ell-1} (z)}{(1-z)^\ell}$ for $\ell \in \IN$.
}
\begin{equation}\label{eq:Eulerian_polynomial_zeta_relation}
\frac{\z_r^n P_{\ell - 1} \!\lp \z_r^n \rp}{\lp 1-\z_r^n \rp^\ell} = r^{\ell-1} \sum_{m=1}^r \z_r^{mn}
\z \!\lp 1-\ell, \frac{m}{r} \rp 
\quad \mbox{for }
\ell \in \IN, r \in \IN_{\geq 2}, \mbox{ and } n \in \{1,2,\ldots,r-1\}.
\end{equation}
The first few of these polynomials are given as
\begin{equation*}
P_0 (t) = 1, \qquad
P_1 (t) = 1, \qquad
P_2 (t) = 1+ t, \qquad
P_3 (t) = 1+4t+t^2.
\end{equation*}
For $n \geq 1$, the polynomial $P_n (t)$ has degree $n-1$ and it satisfies the symmetry property
\begin{equation}\label{eq:Eulerian_polynomial_symmetry}
P_n (t) = t^{n-1} P_n(1/t).
\end{equation}
Frobenius \cite{Frobenius} gave an explicit expansion of $P_n (t)$ around $t=1$ as
\begin{equation}\label{eq:Eulerian_polynomial_tm1_decomposition}
P_n (t) = \sum_{r=0}^n r! \st{n \\ r}  (t-1)^{n-r}
\quad \mbox{for } n \geq 0,
\end{equation}
where \smash{$\left\{ \begin{smallmatrix} n \\ r \end{smallmatrix} \right\}$} denotes a Stirling number of the second kind.
The significance of this identity is that it gives a way to convert between an expansion into fractions appearing on the right hand side of equation \eqref{eq:Eulerian_polynomial_definition}, which are natural for Eisenstein series, and a partial fraction expansion. To be more concrete, Frobenius' identity \eqref{eq:Eulerian_polynomial_tm1_decomposition} can be naturally rewritten as
\begin{equation*}
\frac{P_{k-1} (t)}{(1-t)^k} =
\sum_{r=1}^k (-1)^{k-r} (r-1)! \st{k-1 \\ r-1} \frac{1}{(1-t)^r} 
\quad \mbox{ for } k \geq 1.
\end{equation*}
Inversion of this relation is well-known in combinatorics (see e.g.~\cite{BS95}) and given in terms of 
unsigned Stirling numbers of the first kind (denoted by 
$\left[ \begin{smallmatrix} n \\ r \end{smallmatrix} \right]$) as
\begin{equation}\label{eq:eulerian_decomposition}
\frac{1}{(1-t)^r} = 
\frac{1}{(r-1)!} \sum_{k=1}^r \St{r-1 \\ k-1} \frac{P_{k-1} (t)}{(1-t)^k}
\quad \mbox{for } r \geq 1 .
\end{equation}
We note a few particular values for unsigned Stirling numbers of the first kind as
\begin{equation}\label{eq:unsigned_stirling_first_kind_particular_values}
\St{n \\ 0} = \d_{n,0} \ \  \mbox{for } n \geq 0, \qquad
\St{n \\ n} = 1 \ \  \mbox{for } n \geq 0
\andd
\St{n \\ 1} = \begin{cases}
0 \quad &\mbox{if } n = 0, \\
(n-1)! \quad &\mbox{if } n \geq 1.
\end{cases}
\end{equation}

\subsection{Quasi-shuffle algebras}\label{sec:quasi_shuffle_intro}
Another crucial ingredient for the proof of Theorem \ref{thm:weight-and-level-of-U} is the notion of quasi-shuffle algebras \cite{Hoffman, HI17, IKOO11}. To describe this, let $A$ be a countable set, called the \textit{set of letters}. We endow the $\IQ$ vector space $\IQ A$ with a bilinear, commutative, and associative product $\diamond$. Then we consider the set of words $\langle A \rangle$, which consists of \textit{words} $a_1 \ldots a_\ell$ built from letters $a_1, \ldots,a_\ell \in A$ for $\ell \in \IN_{0}$. Here we denote the empty word (for $\ell =0$) by $1$.
The \textit{quasi-shuffle product} $\qsh$ is a bilinear, associative, and commutative product on the vector space $\QA$ given by linearly extending the product on words  defined through
\begin{equation}\label{eq:quasi_shuffle_product}
1 \qsh w = w \qsh 1 = w
\ \mbox{ and }\ 
a_1 w_1 \qsh a_2 w_2 
= a_1 (w_1 \qsh a_2 w_2) + a_2 (a_1 w_1 \qsh w_2) + (a_1 \diamond a_2) (w_1 \qsh  w_2)\end{equation}
for letters $a_1,a_2 \in A$ and words $w,w_1,w_2 \in \langle A \rangle$.

Quasi-shuffle algebras have been an effective tool in treating iterative sums like \eqref{eq:MacMahon} and \eqref{eq:AAT} and hence studying multiple zeta values (MZV) and their variations like $q$-MZV, multiple polylogarithms, multiple harmonic sums etc. (see e.g.~\cite{BK20, B05,Z15, H18}). Furthermore, they have found broad applications across various areas of mathematics, notably in the theory of Hopf algebras \cite{H18}, and in partition theory through the use of $q$-brackets~\cite{BI22}.
To understand the relevance of the concept to our consideration, let us consider the algebra of formal power series $\IQ[[q]]$ and the subalgebra $\mathcal{H}_{a,q}$ generated (and spanned) by 
\begin{equation*}
h_{k_1,\ldots,k_t;r_1,\ldots,r_t} (a;q) := 
\sum_{1\leq n_1<n_2<\cdots<n_t} \frac{q^{r_1 n_1+r_2 n_2+\cdots+r_t n_t}}{(1+aq^{n_1}+q^{2n_1})^{k_1} (1+aq^{n_2}+q^{2n_2})^{k_2} \cdots (1+aq^{n_t}+q^{2n_t})^{k_t}} ,
\end{equation*}
where $t \in \IN_0$ and $r_1,\ldots,r_t,k_1,\ldots,k_t \in \IN$ (here we let $h:=1$ for the case $t=0$). Note that $\mathcal{U}_{t,k,r}(a;q)$ is equal to $h_{k,\ldots,k;r,\ldots,r}(a;q)$ with $k,r$ repeated $t$ times.
Now we consider the set $A = \IN^2$, the product on $\IQ A$ defined by linearly extending \smash{$(k_1,r_1)\diamond (k_2,r_2) = (k_1+k_2,r_1+r_2)$}, and the corresponding quasi-shuffle product on $\QA$ defined as in \eqref{eq:quasi_shuffle_product}.
Then the mapping 
\begin{equation}\label{eq:qseries_quasi_shuffle_homomorphism}
\langle A \rangle  \ni (k_1,r_1)\ldots (k_t,r_t)
\mapsto 
h_{k_1,\ldots,k_t;r_1,\ldots,r_t} (a;q)
\end{equation}
linearly extends to an algebra homomorphism from the quasi-shuffle algebra $\QA$ to $\mathcal{H}_{a,q}$ (see for example \cite[Lemma~2.18]{Bac25} for details). 
So the algebraic structure of $\QA$ has immediate implications for the multiplicative behavior of the $q$-series $h_{k_1,\ldots,k_t;r_1,\ldots,r_t}(a;q)$. For example, we have
\begin{equation*}
h_{k;r} (a;q) \, h_{\ell;s} (a;q) = 
h_{k,r;\ell,s} (a;q) + h_{\ell,s;k,r} (a;q) + h_{k+\ell;r+s} (a;q)
\end{equation*}
as can be seen by disentangling the involved sums as
\begin{multline*}
\sum_{n_1\geq 1}  \frac{q^{n_1r}}{(1+aq^{n_1}+q^{2n_1})^k} 
\sum_{n_2\geq 1} \frac{q^{n_2s}}{(1+aq^{n_2}+q^{2n_2})^\ell} 
= \!\!\sum_{1\le n_1<n_2} \!\!\frac{q^{n_1r+n_2s}}{(1+aq^{n_1}+q^{2n_1})^k(1+aq^{n_2}+q^{2n_2})^\ell} 
\\
+ \!\!\!\!\sum_{1\le n_2<n_1}  \!\!\frac{q^{n_2s+n_1r}}{(1+aq^{n_2}+q^{2n_2})^\ell (1+aq^{n_1}+q^{2n_1})^k}
+ \sum_{n\ge 1} \!\!\frac{q^{n(r+s)}}{(1+aq^{n}+q^{2n})^{k+\ell}}
\end{multline*}
and the right hand side of this identity mirrors 
$(k,r) \qsh (\ell,s) = (k,r)(\ell,s) + (\ell,s)(k,r) + (k+\ell, r+s)$.

One property of quasi-shuffle algebras that would be key to our work is the following identity given in \cite{HI17} as equation~(32) for any letter $a \in A$:
\begin{equation}\label{eq:qsh_identity}
\exp_{\qsh}\!\left(\sum_{n\geq 1} \frac{(-1)^{n+1}}{n}  a^{\diamond n} X^n \right) = \sum_{j\geq 0} a^{\circ j} X^j.
\end{equation}
Here $a^{\diamond n}$ and $a^{\circ n}$ are the $n$-fold diamond product and concatenation, respectively, of the letter $a \in A$ and
\begin{equation*}
\exp_{\qsh}(uX) := \sum_{n\geq 0} u^{\qsh n} \frac{ X^n}{n!} 
\quad \mbox{for } u \in \QA [[X]]
\end{equation*}
with $u^{\qsh n}$ denoting the $n$-fold quasi-shuffle product.

\section{quasimodularity of $\mathcal{U}_{t,k,k}$}\label{sec:quasimod_Utkk}
In this section, our goal is to prove Theorem~\ref{thm:weight-and-level-of-U} along with generalizations in select parts.
We start with part (2) and discuss the decomposition of $\mathcal{U}_{t,k,k}(a;q)$ to $\mathcal{U}_{k,k}(a;q), \mathcal{U}_{2k,2k}(a;q), \ldots, \mathcal{U}_{tk,tk}(a;q)$. For this purpose, we use the following key lemma deduced from the identity~\ref{eq:qsh_identity}.

\begin{lemma}\label{lem:QS-U}
	Let $k,r\in\N$. Then we have
	\begin{align*}
		\exp\lrb{\sum_{n\ge 1}\frac{(-1)^{n+1}}{n}\mathcal{U}_{nk,nr}(a;q)X^n}= 1+ \sum_{j\geq 1} \mathcal{U}_{j,k,r}(a;q) X^j.
	\end{align*}
\end{lemma}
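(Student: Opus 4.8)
The plan is to derive Lemma~\ref{lem:QS-U} as a direct consequence of the quasi-shuffle identity~\eqref{eq:qsh_identity} together with the algebra homomorphism~\eqref{eq:qseries_quasi_shuffle_homomorphism}. First I would specialize~\eqref{eq:qsh_identity} to the single letter $a = (k,r) \in A = \IN^2$. With the diamond product defined by $(k_1,r_1) \diamond (k_2,r_2) = (k_1+k_2,r_1+r_2)$, the $n$-fold diamond power is $(k,r)^{\diamond n} = (nk,nr)$, while the $n$-fold concatenation $(k,r)^{\circ n}$ is the word $(k,r)(k,r)\cdots(k,r)$ with $n$ copies. Thus~\eqref{eq:qsh_identity} reads
\begin{equation*}
\exp_{\qsh}\!\left(\sum_{n\geq 1} \frac{(-1)^{n+1}}{n} (nk,nr) X^n \right) = \sum_{j\geq 0} (k,r)^{\circ j} X^j
\end{equation*}
as an identity in $\QA[[X]]$.

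Next I would apply the algebra homomorphism $\Phi : \QA \to \mathcal{H}_{a,q}$ from~\eqref{eq:qseries_quasi_shuffle_homomorphism}, extended coefficientwise to a homomorphism $\QA[[X]] \to \mathcal{H}_{a,q}[[X]]$. Since $\Phi$ is an algebra homomorphism, it intertwines the quasi-shuffle product $\qsh$ with ordinary multiplication in $\IQ[[q]]$, and hence sends $\exp_{\qsh}$ to the ordinary exponential $\exp$. On the left-hand side, $\Phi$ sends the letter $(nk,nr)$ to $h_{nk;nr}(a;q) = \mathcal{U}_{1,nk,nr}(a;q) = \mathcal{U}_{nk,nr}(a;q)$, using the shorthand~\eqref{eq:U-k-r} and the observation that $\mathcal{U}_{t,k,r}(a;q) = h_{k,\ldots,k;r,\ldots,r}(a;q)$ with $k,r$ repeated $t$ times. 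On the right-hand side, $\Phi$ sends the word $(k,r)^{\circ j}$ to $h_{k,\ldots,k;r,\ldots,r}(a;q) = \mathcal{U}_{j,k,r}(a;q)$, with the $j=0$ term giving the empty word mapping to $1$. Assembling these gives exactly
\begin{equation*}
\exp\!\left(\sum_{n\geq 1} \frac{(-1)^{n+1}}{n} \mathcal{U}_{nk,nr}(a;q) X^n \right) = 1 + \sum_{j\geq 1} \mathcal{U}_{j,k,r}(a;q) X^j,
\end{equation*}
which is the claimed identity.

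I do not anticipate a serious obstacle here; the lemma is essentially a dictionary translation of a known quasi-shuffle identity. The one point requiring a little care is justifying that $\Phi$ extends to a well-defined homomorphism on the power series rings and commutes with $\exp_{\qsh}$ — i.e., that the infinite sums converge $q$-adically and that applying $\Phi$ term by term is legitimate. This follows because each $\mathcal{U}_{nk,nr}(a;q)$ has $q$-adic valuation at least $n$ (the innermost summand contributes $q^{nr}$ at minimum, and in fact the valuation grows with $n$), so the exponential series on the left is a well-defined element of $\IQ[[q]][[X]]$ and matches the right-hand side order by order in both $X$ and $q$. I would state this convergence remark briefly and then conclude.
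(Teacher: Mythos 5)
Your proposal is correct and follows essentially the same route as the paper: specialize the Hoffman--Ihara identity \eqref{eq:qsh_identity} to the letter $(k,r)$ and push it through the homomorphism \eqref{eq:qseries_quasi_shuffle_homomorphism}, noting $(k,r)^{\diamond n}=(nk,nr)\mapsto \mathcal{U}_{nk,nr}(a;q)$ and $(k,r)^{\circ j}\mapsto \mathcal{U}_{j,k,r}(a;q)$. The extra remark on $q$-adic convergence is a harmless (and reasonable) addition the paper leaves implicit.
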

\begin{proof}
Consider the quasi-shuffle algebra on $A = \IN^2$ from Section \ref{sec:quasi_shuffle_intro}. We use the identity \eqref{eq:qsh_identity} with the letter $a=(k,r)$ and then use the homomorphism \eqref{eq:qseries_quasi_shuffle_homomorphism} to turn that into an identity on $q$-series (since the quasi-shuffle product corresponds to the ordinary multiplication of $q$-series under this homomorphism).
The lemma statement follows once we note that \eqref{eq:qseries_quasi_shuffle_homomorphism} maps $a^{\diamond n} = (nk,nr)$ to $h_{nk;nr} (a;q) = \mathcal{U}_{nk,nr}(a;q)$
and $a^{\circ j} = (k,r)\ldots (k,r)$ with $k,r$ repeated $j$ times to $h_{k,\ldots,k;r,\ldots,r} (a;q) = \mathcal{U}_{j,k,r}(a;q)$ for $j \geq 1$. 
\end{proof}

We are now ready to prove Theorem~\ref{thm:weight-and-level-of-U} (2) by giving the precise form of the aforementioned decomposition of $\mathcal{U}_{t,k,k}(a;q)$, which we state more generally for $\mathcal{U}_{t,k,r}(a;q)$.

\subsection{Proof of Theorem~\ref{thm:weight-and-level-of-U} (2)}
We use the generating function of P\'olya cycle index polynomial for symmetric groups (see Example 5.2.10 of \cite{Stanley}) to rewrite the left hand side of Lemma~\ref{lem:QS-U} as
\begin{align*}
\exp\!\lrb{\sum_{n\ge 1}\frac{(-1)^{n+1}}{n}\mathcal{U}_{nk,nr}(a;q)X^n} =
1 + \sum_{n\ge 1} \sum_{\lambda\vdash n} \prod_{s=1}^n \frac{1}{m_{\l,s}!}\lrb{\frac{(-1)^{s+1}\, \mathcal{U}_{sk,sr}(a;q)}{s}}^{m_{\l,s}} X^n,
\end{align*}
where $m_{\l,s}$ denotes the multiplicity of $s$ in $\lambda$.
Comparing this with the right hand side of Lemma~\ref{lem:QS-U} yields
\begin{align*}
	\mathcal{U}_{t,k,r}(a;q) = \sum_{\lambda\vdash t} \prod_{s=1}^t \frac{1}{m_{\l,s}!}\lrb{\frac{(-1)^{s+1}\mathcal{U}_{sk,sr}(a;q)}{s}}^{m_{\l,s}}.
\end{align*}
Note that the result is an isobaric polynomial of degree $t$ in the variables 
$\mathcal{U}_{k,r}(a;q), \mathcal{U}_{2k,2r}(a;q), \ldots, \mathcal{U}_{tk,tr}(a;q)$
since $\sum_{s=1}^n s \, m_{\l,s}=t$. The argument for the level is clear from part (1).  \qed

\subsection{Proof of Theorem \ref{thm:weight-and-level-of-U} (1)}
Our next goal is proving the quasimodularity of the functions $\mathcal{U}_{k,k}(a;q)$ appearing in the decomposition of $\mathcal{U}_{t,k,k}(a;q)$. We treat the cases where $a$ equals $0,2,1,-1$, in succession.

\subsection{The Case $a=0$} We give the following precise version for the quasimodularity of $\mathcal{U}_{k,k}(0;q)$. Here $\chi_{4,2}$ denotes the unique primitive character modulo $4$ given by
\begin{align*}
	\chi_{4,2}(m) = \begin{cases} 
						0 & \text{ if }  m\equiv 0\Pmod{2},\\
						1 & \text{ if } m\equiv 1\Pmod{4},\\
						-1 & \text{ if } m\equiv 3\Pmod{4}.
					\end{cases}
\end{align*}

\begin{theorem}\label{thm:a=0-case}
For $k\in\N$, we have
\begin{align*}
		\mathcal{U}_{k,k}(0;q) &= 
		- 2^{-k-1}  + 		
		\begin{cases}
									\displaystyle\sum_{j=1}^{\frac{k}{2}} a_k(j) \lrb{2^{2j} G_{2j}(4\tau)- G_{2j}(2\tau)}  & \text{if } 2 \mid k ,
									\vspace{1ex} \\ 
									\displaystyle\sum_{j=0}^{\frac{k-1}{2}} b_k(j) G_{2j+1}(\chi_{4,2};\tau)  & \text{if } 2 \nmid k,
								\end{cases}
\end{align*}
with the coefficients $a_k (j)$ and $b_k (j)$ defined through\footnote{
More explicitly we have $\smash{a_k\!\lrb{\frac{k}{2}}=(-1)^{\frac{k}{2}}\!\frac{1}{(k-1)!},\, b_{k}\!\lrb{\frac{k-1}{2}}=(-1)^{\frac{k-1}{2}}\!\frac{1}{2^{k-1}(k-1)!}}$ and for $j< \lfloor \frac{k}{2} \rfloor$
\begin{align*}
a_k(j) = \frac{(-1)^{j}}{(k-1)!} \!\!
\sum_{\substack{\ell_s\in\left\{1,2,\ldots,\frac{k}{2}-1\right\}\\ \ell_1<\ell_2<\ldots <\ell_{\frac{k}{2}-j}}} \!\!\!\!\!\!\!\!\!\! (\ell_1\ell_2\cdots \ell_{\frac{k}{2}-j})^2
\andd
b_k(j) = \frac{(-1)^{j}}{2^{k-1}(k-1)!} \!\! \sum_{\substack{\ell_s\in\left\{1,2,\ldots,k-2\right\}\\ \ell_1< \ell_2 <\cdots <\ell_{\frac{k-1}{2}-j} \\ \ell_s: \,\mathrm{odd}}} \!\!\!\!\!\!\!\!\!\! \left( \ell_1 \ell_2 \cdots \ell_{\frac{k-1}{2}-j} \right)^2 .
\end{align*}
}
\begin{equation}\label{eq:a0_case_ak_bk_coefs}
\sum_{j=1}^{\frac{k}{2}} a_k(j) \, m^{2j-1}
:= (-1)^{\frac{k}{2}}\binom{m+\frac{k}{2}-1}{k-1}
\andd
\sum_{j=0}^{\frac{k-1}{2}} b_k(j) \, m^{2j} 
:= 
(-1)^{\frac{k-1}{2}}\binom{\frac{m+k}{2}-1}{k-1} .
\end{equation}
In particular, $\mathcal{U}_{k,k}(0;q)$ is quasimodular (modular) of level $4$ and highest weight~$k$ for $k$ even (odd).
\end{theorem}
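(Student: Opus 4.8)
The plan is to start from the definition \eqref{eq:U-k-r} with $a=0$, namely
\[
\mathcal{U}_{k,k}(0;q) = \sum_{n\ge 1} \frac{q^{kn}}{(1+q^{2n})^k},
\]
and expand the summand into a $\C$-linear combination of the ``geometric building blocks'' $\frac{q^{rn}}{(1-\zeta q^n)^j}$ for $\zeta$ a root of unity, since $1+q^{2n} = (1-iq^n)(1+iq^n)$. First I would factor and apply a partial fraction decomposition in the variable $q^n$: writing $x=q^n$, we have
\[
\frac{x^k}{(1+x^2)^k} = \frac{x^k}{(1-ix)^k(1+ix)^k},
\]
and I expand this as $\sum_{j=1}^k \big(\frac{c_j}{(1-ix)^j} + \frac{\bar c_j}{(1+ix)^j}\big)$ (the absence of a polynomial part follows from comparing degrees: numerator degree $k$ against denominator degree $2k$). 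The precise coefficients $c_j$ are extracted by the standard residue formula, and here is where I expect to use the Eulerian-polynomial / Stirling-number machinery of Section \ref{sec:prelim}: rather than brute-forcing residues, rewrite $\frac{x^k}{(1+x^2)^k}$ using $x^2 = (1+x^2)-1$ so that $x^k = x^{k \bmod 2}\big((1+x^2)-1\big)^{\lfloor k/2\rfloor}$, expand by the binomial theorem, and collect powers of $(1+x^2)$; this directly produces a decomposition into $\frac{1}{(1+x^2)^j}$ (times $x$ when $k$ is odd), and \eqref{eq:eulerian_decomposition} then converts $\frac{1}{(1-\zeta x)^j}$ terms into the $\frac{P_{\ell-1}}{(1-\zeta x)^\ell}$ form adapted to Eisenstein series. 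The two parities of $k$ behave differently precisely because of that leftover factor $x^{k\bmod 2}$, which is what splits the statement into the even case (pure Eisenstein series $G_{2j}$ at levels $2,4$) and the odd case (the odd-weight character Eisenstein series $G_{2j+1}(\chi_{4,2};\tau)$).

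Next I would sum over $n\ge 1$. For the even case, after the decomposition each term is of the shape $\sum_{n\ge1} \frac{(q^{2n})^{\ell_0}\, P_{2j-1}(\pm q^{2n})}{(1\mp q^{2n})^{2j}}$-type; using \eqref{eq:Eulerian_polynomial_definition} this equals $\sum_{m\ge1} m^{2j-1}(\pm 1)^{?}q^{2mn}$, and summing the geometric-like series over $n$ gives a combination of $F_{2j}(2\tau)$ and $F_{2j}(4\tau)$ (the factor $2^{2j}$ in front of $G_{2j}(4\tau)$ comes from the $m^{2j-1}$ weighting interacting with the doubling $q^{2n}\mapsto q^{4n}$, i.e. from the identity $\sum_{m,n} m^{2j-1} q^{4mn} = V_2 F_{2j}$ versus the odd-index contribution). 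The constant terms are supplied by \eqref{eq:Eis-k-even} and the Bernoulli values; tracking the sole constant $-2^{-k-1}$ is a matter of evaluating the $q^0$ discrepancy between $F_{2j}$ and $G_{2j}$, and it should fall out of setting $x=q^n$, $n$ ranging, at $q\to 0$. For the odd case the leftover $x$ forces the geometric series to run over $n$ with an alternating sign depending on $n \bmod 4$, which is exactly the Dirichlet character $\chi_{4,2}$; matching $\sum_{m,n\ge1}\chi_{4,2}(m)m^{2j}q^{mn}$ to $F_{2j+1}(\chi_{4,2};\tau)$ from \eqref{eq:Fk_definition}, together with the $L$-value constant from Lemma \ref{lem:eis-with-char-k=1,2}/\ref{lem:eis-with-char-kge3}, completes that branch. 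The claimed closed forms \eqref{eq:a0_case_ak_bk_coefs} for $a_k(j), b_k(j)$ are then identified by recognizing that the binomial-theorem expansion of $\big((1+x^2)-1\big)^{\lfloor k/2\rfloor}$ produces exactly the generating polynomial $(-1)^{\lfloor k/2\rfloor}\binom{m+\lfloor k/2\rfloor-1}{k-1}$ in the variable $m$ after the Eulerian substitution — this is the bookkeeping identity that ties the residues to the stated symmetric-function formulas in the footnote.

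The main obstacle I anticipate is the careful bookkeeping of constants and of the powers of $2$: disentangling which contributions land at level $2$ versus level $4$, getting the $2^{2j}$ normalization exactly right, and pinning down the single additive constant $-2^{-k-1}$ uniformly in $k$ and in both parities. None of the individual steps is deep — partial fractions, geometric summation, and the Eulerian/Stirling dictionary from \eqref{eq:eulerian_decomposition} and \eqref{eq:Eulerian_polynomial_tm1_decomposition} are all elementary — but the simultaneous verification that the coefficients assemble into the compact binomial generating functions \eqref{eq:a0_case_ak_bk_coefs}, rather than some messier expression, requires a clean organizing identity. I would isolate that as a standalone combinatorial sublemma (expand $x^k/(1+x^2)^k$ in powers of $1/(1+x^2)$ and read off the coefficient generating polynomial), prove it once, and then feed it into both parity cases. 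The quasimodularity (resp. modularity for odd $k$) conclusion is then immediate from Lemmas \ref{lem:eis-with-char-kge3} and \ref{lem:eis-with-char-k=1,2} together with the level-raising operator $V_\ell$ in \eqref{eq:V-op}, noting that only $G_2(\tau)$ (which appears for even $k$ at $j=1$) is merely quasimodular while the odd-$k$ decomposition involves only the genuinely modular $G_{2j+1}(\chi_{4,2};\tau)$.
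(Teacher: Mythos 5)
Your plan is workable and ends up at the same decomposition as the paper, but it takes a more roundabout route. The paper's own proof of this theorem does not use partial fractions or the Stirling/Eulerian inversion \eqref{eq:eulerian_decomposition} at all to produce the Eisenstein series: it simply expands $(1+q^{2n})^{-k}$ by the binomial series, shifts the summation index ($m\mapsto m-\tfrac k2$ for even $k$, $m\mapsto\tfrac{m-k}{2}$ for odd $k$, using the vanishing of the binomial coefficient to extend the range), and observes that the resulting coefficient $(-1)^{\frac k2}\binom{m+\frac k2-1}{k-1}$ (resp.\ $(-1)^{\frac{k-1}{2}}\binom{\frac{m+k}{2}-1}{k-1}$) is an odd (resp.\ even) polynomial in $m$ of degree $k-1$ --- which is precisely the definition \eqref{eq:a0_case_ak_bk_coefs} of $a_k(j)$, $b_k(j)$. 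The combinatorial sublemma you defer (that the expansion of $x^{k\bmod 2}\bigl((1+x^2)-1\bigr)^{\lfloor k/2\rfloor}$ in powers of $(1+x^2)^{-1}$ reassembles into the generating polynomial $(-1)^{\lfloor k/2\rfloor}\binom{m+\lfloor k/2\rfloor-1}{k-1}$) is true, but its shortest proof is exactly this one-line binomial expansion of $(-t)^{\lfloor k/2\rfloor}(1-t)^{-k}$ with $t=-q^{2n}$; so your Stirling-number detour buys nothing here, though it mirrors the strategy the paper does need for the $a=\pm1$ cases (Theorems \ref{thm:Qk_quasipolynomial_a_1} and \ref{thm:Qk_quasipolynomial_a_m1}). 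Your identification of the Eisenstein series themselves ($(-1)^m=2\delta_{2\mid m}-1$ giving $2^{2j}F_{2j}(4\tau)-F_{2j}(2\tau)$, and $\chi_{4,2}(m)=(-1)^{\frac{m-1}{2}}$ on odd $m$ giving $F_{2j+1}(\chi_{4,2};\tau)$) matches the paper; note only that the alternating sign in the odd case lives in the binomial-expansion index $m$, not in $n$ as you wrote, though the sum you then display is the correct one.

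The one genuine thin spot is the constant $-2^{-k-1}$. After restoring constant terms you are left with $-\tfrac12\sum_j a_k(j)\lrb{2^{2j}-1}\zeta(1-2j)$ for even $k$ and $-\tfrac12\sum_j b_k(j)\,L(\chi_{4,2},-2j)$ for odd $k$, and saying this ``should fall out'' of the $q\to 0$ behaviour is not an argument: these are regularized values, and the claim that the combination equals $-2^{-k-1}$ uniformly in $k$ needs a mechanism. The paper supplies it by multiplying the defining identities \eqref{eq:a0_case_ak_bk_coefs} by $z^m$, summing over $m$ via \eqref{eq:Eulerian_polynomial_definition} to get $\sum_j a_k(j)\frac{zP_{2j-1}(z)}{(1-z)^{2j}}=(-1)^{\frac k2}\frac{z^{k/2}}{(1-z)^k}$ (and the analogous odd-$m$ identity for $b_k$), and then evaluating at the root of unity $z=-1$ (resp.\ $z=i$) using the Eulerian--Hurwitz relation \eqref{eq:Eulerian_polynomial_zeta_relation}; this converts the divergent-looking $\sum_m f(m)(\pm1)^m$ into the elementary value $2^{-k}$ of the rational function and yields $-2^{-k-1}$ in both parities. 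You should either adopt this evaluation or replace it with an equivalent Abel-summation argument; without it, the additive constant in the theorem is asserted rather than proved.
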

\begin{proof}
Using the binomial expansion of the denominator we have
\begin{equation}\label{eq:U-as-der}
\mathcal{U}_{k,k}(0;q) = \sum_{n\geq 1} \frac{q^{kn}}{(1+q^{2n})^k}
= 
\sum_{\substack{n \geq 1 \\ m \geq 0}} (-1)^m \binom{m+k-1}{k-1} q^{n(2m+k)} .
\end{equation}
We now consider the even and odd $k$ cases separately. If $k\in2\IN$, then we change variables as \smash{$m \mapsto m-\frac{k}{2}$} and extend the resulting sum over $m \geq \frac{k}{2}$ to one over $m \geq 1$ noticing that the binomial coefficient vanishes at the new points. This yields
\begin{equation*}
\mathcal{U}_{k,k}(0;q) 
=  \sum_{n,m \geq 1} 
(-1)^{m-\frac{k}{2}} \binom{m+\frac{k}{2}-1}{k-1} q^{2mn}.
\end{equation*}
We note that \smash{$m \mapsto (-1)^{\frac{k}{2}}\binom{m+\frac{k}{2}-1}{k-1}$} is an odd polynomial in $m$ of degree $k-1$ and its coefficients are given by $a_k (j)$ as in \eqref{eq:a0_case_ak_bk_coefs} so that 
\begin{equation*}
	\mathcal{U}_{k,k}(0;q) = \sum_{j=1}^{\frac{k}{2}} a_k(j)  \sum_{n,m \geq 1}   (-1)^{m} m^{2j-1} q^{2nm}.
\end{equation*}
Since $(-1)^m = 2\d_{2 \mid m} - 1$, the sum over $n,m$ evaluates to 
$2^{2j} F_{2j} (4 \t) - F_{2j} (2 \t)$ by equation \eqref{eq:Fk_definition}. 
Then restoring the constant Fourier coefficients we find
\begin{equation*}
\mathcal{U}_{k,k}(0;q) = 
\sum_{j=1}^{\frac{k}{2}} a_k(j) \lrb{2^{2j} G_{2j}(4\tau)- G_{2j}(2\tau)} +  \mathscr{A}_k,
\where
\mathscr{A}_k := -\frac{1}{2} \sum_{j=1}^{\frac{k}{2}}a_k(j)\!\lrb{2^{2j}-1\!}\z(1-2j) .
\end{equation*}
To study the constant term, we multiply both sides of the first equation in \eqref{eq:a0_case_ak_bk_coefs} with $z^m$ and sum both sides over $m \in \IN$ for $|z| < 1$ to obtain (using equation \eqref{eq:Eulerian_polynomial_definition})
\begin{equation*}
\sum_{j=1}^{\frac{k}{2}} a_k(j) \frac{z P_{2j-1} (z)}{(1-z)^{2j}}
=
(-1)^{\frac{k}{2}} \frac{z^{\frac{k}{2}}}{(1-z)^k} .
\end{equation*}
Evaluating this identity at $z=-1$ using \eqref{eq:Eulerian_polynomial_zeta_relation} then yields that $\mathscr{A}_k = - 2^{-k-1}$. The theorem statement then follows by noting that the resulting combination $2^{2j} G_{2j} (4 \t) - G_{2j} (2 \t)$ is a modular form of weight $2j$ for $j \geq 2$ and a quasimodular form of weight $2$ for $j=1$ on the congruence group $\Gamma_0 (4)$ (with trivial character).

Similarly for $k \in 2 \IN -1$ we change variables as \smash{$m \mapsto \frac{m-k}{2}$} and extend the resulting sum over odd $m$ with $m \geq k$ to one over $m \geq 1$ again noticing that the binomial coefficient vanishes at these new points:
\begin{equation*}
\mathcal{U}_{k,k}(0;q) = \!\!\!\!
\sum_{\substack{n,m\ge 1\\m\equiv 1\hspace{-0.2cm}\pmod{2}}} \!\!\!\!\!\! (-1)^{\frac{m-k}{2}} \binom{\frac{m+k}{2}-1}{k-1} q^{nm}. 
\end{equation*}
Note that \smash{$m \mapsto (-1)^{\frac{k-1}{2}}\binom{\frac{m+k}{2}-1}{k-1}$} is an even polynomial in $m$ of degree $k-1$ and its coefficients are given by $b_k (j)$ as in \eqref{eq:a0_case_ak_bk_coefs}. Also note that \smash{$\chi_{4,2} (m) = (-1)^{\frac{m-1}{2}}$} for odd $m$ (while vanishing for even $m$) to write
\begin{equation*}
	\mathcal{U}_{k,k}(0;q) = \sum_{j=0}^{\frac{k-1}{2}} b_k(j) \sum_{n,m \geq 1} \chi_{4,2}(m) \, m^{2j} q^{mn} .
\end{equation*}
The sum over $n,m$ evaluates to $F_{2j+1} (\chi_{4,2}; \t)$ by  \eqref{eq:Fk_definition} and restoring the constant Fourier coefficient yields
\begin{equation*}
\mathcal{U}_{k,k}(0;q) = 
\sum_{j=0}^{\frac{k-1}{2}} b_k(j) G_{2j+1}(\chi_{4,2};\tau) + \mathscr{B}_k,
\where
\mathscr{B}_k:=- \sum_{j=0}^{\frac{k-1}{2}} b_k(j) \frac{L(\chi_{4,2},-2j)}{2}.
\end{equation*}
As above (using \eqref{eq:Eulerian_polynomial_definition}), we multiply both sides of the second equation in \eqref{eq:a0_case_ak_bk_coefs} with $z^m$ and sum both sides over $m \in 2\IN-1$ for $|z| < 1$ to obtain 
\begin{equation*}
\sum_{j=0}^{\frac{k-1}{2}} b_k(j) 
\lp \frac{z P_{2j} (z)}{(1-z)^{2j+1}} - 2^{2j} \frac{z^2 P_{2j} (z^2)}{(1-z^2)^{2j+1}} \rp
=
(-1)^{\frac{k-1}{2}} \frac{z^{k}}{(1-z^2)^k} .
\end{equation*}
Evaluating at $z=i$ using \eqref{eq:Eulerian_polynomial_zeta_relation} then yields that $\mathscr{B}_k = - 2^{-k-1}$ as well.
The theorem statement then follows by noting that $G_{2j+1} (\chi_{4,2}; \t)$ belongs to $M_{2j+1}(\Gamma_0(4),\chi_{4,2})$ for all $j \geq 0$.
\end{proof}

\subsection{The Case $a=2$} We again give the precise decomposition of $\mathcal{U}_{k,k}(2;q)$ to Eisenstein series.
\begin{theorem}
For $k\in\N$, we have (with $a_\ell(j)$ defined as in Theorem~\ref{thm:a=0-case})
\begin{align*}
	\mathcal{U}_{k,k}(2;q) &= -2^{-2k-1} + \sum_{j=1}^{k} a_{2k}(j) \lrb{ 2^{2j}G_{2j}(2\tau) - G_{2j}(\tau)} .
\end{align*}
In particular, $\mathcal{U}_{k,k}(2;q)$ is quasimodular of level $2$ and highest weight $2k$.
\end{theorem}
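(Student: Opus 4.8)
The plan is to follow verbatim the computation for the $a=0$, $k$ even case, now exploiting the factorization $1+2q^n+q^{2n}=(1+q^n)^2$, so that $\mathcal{U}_{k,k}(2;q)=\sum_{n\geq1}q^{kn}/(1+q^n)^{2k}$. First I would expand the denominator binomially and shift the summation index:
\begin{equation*}
\mathcal{U}_{k,k}(2;q)=\sum_{\substack{n\geq1\\ m\geq0}}(-1)^m\binom{m+2k-1}{2k-1}q^{n(m+k)}=\sum_{n,m\geq1}(-1)^{m-k}\binom{m+k-1}{2k-1}q^{nm},
\end{equation*}
where after the substitution $m\mapsto m-k$ the sum over $m\geq k$ is extended to $m\geq1$ because $\binom{m+k-1}{2k-1}$ vanishes for $1\leq m\leq k-1$. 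Since $(-1)^{-k}=(-1)^k$ and, by the defining relation in Theorem~\ref{thm:a=0-case} applied with $\ell=2k$, the map $m\mapsto(-1)^k\binom{m+k-1}{2k-1}$ is the odd polynomial $\sum_{j=1}^{k}a_{2k}(j)\,m^{2j-1}$ of degree $2k-1$, this becomes $\mathcal{U}_{k,k}(2;q)=\sum_{j=1}^{k}a_{2k}(j)\sum_{n,m\geq1}(-1)^m m^{2j-1}q^{nm}$.

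Next, using $(-1)^m=2\delta_{2\mid m}-1$ and rescaling $q\mapsto q^2$ in the even-$m$ part, the inner double sum evaluates to $2^{2j}F_{2j}(2\tau)-F_{2j}(\tau)$ by \eqref{eq:Fk_definition}, exactly as in the $a=0$ proof but one level lower since the natural variable is now $q$ rather than $q^2$. Restoring the constant Fourier coefficients via $G_{2j}(\tau)=\tfrac{\z(1-2j)}{2}+F_{2j}(\tau)$ gives
\begin{equation*}
\mathcal{U}_{k,k}(2;q)=\sum_{j=1}^{k}a_{2k}(j)\lrb{2^{2j}G_{2j}(2\tau)-G_{2j}(\tau)}-\frac12\sum_{j=1}^{k}a_{2k}(j)\lrb{2^{2j}-1}\z(1-2j).
\end{equation*}
To identify the constant with $-2^{-2k-1}$, I would multiply $\sum_{j=1}^{k}a_{2k}(j)m^{2j-1}=(-1)^k\binom{m+k-1}{2k-1}$ by $z^m$ and sum over $m\in\N$ for $|z|<1$; using \eqref{eq:Eulerian_polynomial_definition} and $\sum_{m\geq1}\binom{m+k-1}{2k-1}z^m=\tfrac{z^k}{(1-z)^{2k}}$ this yields $\sum_{j=1}^{k}a_{2k}(j)\tfrac{zP_{2j-1}(z)}{(1-z)^{2j}}=(-1)^k\tfrac{z^k}{(1-z)^{2k}}$, and evaluating at $z=-1$ via \eqref{eq:Eulerian_polynomial_zeta_relation} (the case $r=2$) collapses the left-hand side to $\sum_{j=1}^{k}a_{2k}(j)(2^{2j}-1)\z(1-2j)$ and the right-hand side to $2^{-2k}$, producing the claimed constant.

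The quasimodularity statement then follows from the standard facts that $2^{2j}G_{2j}(2\tau)-G_{2j}(\tau)$ is a genuine modular form of weight $2j$ on $\Gamma_0(2)$ for $j\geq2$ and a quasimodular form of weight $2$ and depth $1$ for $j=1$, combined with $a_{2k}(k)=(-1)^k/(2k-1)!\neq0$, which shows the weight-$2k$ contribution does not cancel and hence the highest weight is exactly $2k$. I do not expect any genuine obstacle here: the argument is a transcription of the $a=0$, $k$ even case with $k$ replaced by $2k$, and the only point demanding a little care is the bookkeeping of Hurwitz zeta values in the $z=-1$ specialization of \eqref{eq:Eulerian_polynomial_zeta_relation} (using $\z(s,1)=\z(s)$ and $\z(s,1/2)=(2^s-1)\z(s)$).
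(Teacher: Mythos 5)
Your proof is correct and is in substance the paper's approach: both arguments rest on the factorization $1+2q^n+q^{2n}=(1+q^n)^2$, which reduces the statement to the $a=0$ analysis with $k$ replaced by $2k$, and your computation (binomial expansion, shift $m\mapsto m-k$, Eulerian-polynomial evaluation at $z=-1$ for the constant $-2^{-2k-1}$) is the valid transcription of that case. The paper merely packages this in one line by noting $\mathcal{U}_{k,k}(2;q)=\mathcal{U}_{2k,2k}(0;q^{1/2})$ and substituting $\tau\mapsto\tau/2$ into the already-proven Theorem~\ref{thm:a=0-case}, rather than re-running the computation as you do.
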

\begin{proof}
The result immediately follows from Theorem~\ref{thm:a=0-case} once we note
$\mathcal{U}_{k,k}(2;q) = \mathcal{U}_{2k,2k}(0;q^{\frac{1}{2}})$.
\end{proof}

\subsection{The Case $a=1$}
Here we state a more general result that allows us to prove the quasimodularity of the combinations $\mathcal{U}_{k,r}(1;q)+\mathcal{U}_{k,2k-r}(1;q)$ as well. 
In the theorem statement below, note that  $\mathcal{U}_{k,k}(1;q)$ corresponds to choosing \smash{$Q_k(x)=x^k$}. 
Also as in Example~\ref{eg:1}, we use
$\chi_{3,2}$ to denote the unique primitive character modulo $3$ given by
\begin{align*}
	\chi_{3,2}(m) = \begin{cases} 
						0 & \text{ if }  m\equiv 0\Pmod{3},\\
						1 & \text{ if } m\equiv 1\Pmod{3},\\
						-1 & \text{ if } m\equiv 2\Pmod{3}.
					\end{cases}
\end{align*}

\begin{theorem}\label{thm:Qk_quasipolynomial_a_1}
Let $k \in \IN$ and $Q_k$ be a polynomial of degree $\leq 2k-1$ satisfying $Q_k (x) = x^{2k} Q_k(1/x)$ and such that $Q_k (0) = 0$. Then we have
\begin{equation}\label{eq:Qk_quasipolynomial_a_1}
\sum_{n \geq 1} \frac{Q_k (q^n)}{(1+q^n+q^{2n})^k}
= 
- \frac{Q_k (1)}{2 \cdot 3^k}
+
i \sqrt{3} \sum_{\substack{\ell=1 \\ \ell: \, \mathrm{odd}}}^k 
c_{Q_k}\!(\ell) \, G_\ell (\chi_{3,2}; \t) 
+
\sum_{\substack{\ell=1 \\ \ell: \, \mathrm{even}}}^k 
c_{Q_k}\!(\ell) \! \lp 3^\ell G_\ell (3 \t)  - G_\ell (\t) \rp ,
\end{equation}
where
\begin{equation*}
c_{Q_k}\!(\ell) \! := \!\sum_{r=\ell}^k \frac{a_{Q_k}\!(r)}{(r-1)!} \St{r-1 \\ \ell -1}
\end{equation*}
with $a_{Q_k}\!(r)$ coming from the decomposition 
\begin{equation*}
\frac{Q_k(x)}{(1+x+x^2)^k} =
\sum_{r=1}^k a_{Q_k}\!(r) \frac{\z_3 x}{(1-\z_3 x)^r}
+ \sum_{r=1}^k a'_{Q_k}\!(r) \frac{\z^{-1}_3 x}{(1-\z^{-1}_3 x)^r} .
\end{equation*}
In particular, \eqref{eq:Qk_quasipolynomial_a_1} is a linear combination of modular forms (and quasimodular for weight $2$) with level $3$ and highest weight $k$.
\end{theorem}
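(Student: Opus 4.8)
The plan is to work throughout with the rational function $F(x):=Q_k(x)/(1+x+x^2)^k$ and to use the factorization $1+x+x^2=(1-\z_3 x)(1-\z_3^{-1}x)$, valid because $\z_3+\z_3^{-1}=-1$. The hypotheses $\deg Q_k\le 2k-1$ and $Q_k(0)=0$ make $F$ a proper rational function that vanishes at $x=0$, so it has the partial fraction expansion written in the theorem, with poles of order at most $k$ at $\z_3^{-1}$ and at $\z_3$; this is what defines $a_{Q_k}(r)$ and $a'_{Q_k}(r)$. Multiplying Frobenius' identity \eqref{eq:eulerian_decomposition} by the appropriate linear factor to rewrite each $\z_3 x/(1-\z_3 x)^r$, via Stirling numbers of the first kind, as a combination of the $\z_3 x\,P_{\ell-1}(\z_3 x)/(1-\z_3 x)^{\ell}$ with $\ell\le r$, and collecting terms, I would re-expand
\begin{equation*}
F(x)=\sum_{\ell=1}^{k}c_{Q_k}(\ell)\,\frac{\z_3 x\,P_{\ell-1}(\z_3 x)}{(1-\z_3 x)^{\ell}}+\sum_{\ell=1}^{k}c'_{Q_k}(\ell)\,\frac{\z_3^{-1}x\,P_{\ell-1}(\z_3^{-1}x)}{(1-\z_3^{-1}x)^{\ell}},
\end{equation*}
where $c_{Q_k}(\ell)$ is exactly the combination of the $a_{Q_k}(r)$ in the statement and $c'_{Q_k}(\ell)$ is built from the $a'_{Q_k}(r)$ in the same way.

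The structural heart of the argument is the palindromic symmetry $Q_k(x)=x^{2k}Q_k(1/x)$, which is precisely the assertion $F(1/x)=F(x)$. I would feed this into the re-expanded formula: under $x\mapsto 1/x$ each $\z_3 x$-summand becomes a function whose only pole is at $\z_3$, i.e.\ a $\z_3^{-1}x$-summand, and the Eulerian symmetry $P_n(t)=t^{n-1}P_n(1/t)$ from \eqref{eq:Eulerian_polynomial_symmetry} shows that $\z_3 x\,P_{\ell-1}(\z_3 x)/(1-\z_3 x)^{\ell}$ turns into $(-1)^{\ell}$ times $\z_3^{-1}x\,P_{\ell-1}(\z_3^{-1}x)/(1-\z_3^{-1}x)^{\ell}$ (for $\ell=1$ an extra additive constant appears, but the two $\ell=1$ constants coming from the two families cancel, as they must since $F$ has no constant term). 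Because the decomposition of a proper rational function into a part with poles only at $\z_3^{-1}$, a part with poles only at $\z_3$, and a constant is unique, comparing both sides yields $c'_{Q_k}(\ell)=(-1)^{\ell}c_{Q_k}(\ell)$ for every $\ell$.

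The rest is assembly. Setting $x=q^n$, summing over $n\ge 1$, using $t\,P_{\ell-1}(t)/(1-t)^{\ell}=\sum_{m\ge 1}m^{\ell-1}t^{m}$ from \eqref{eq:Eulerian_polynomial_definition}, and interchanging the absolutely convergent double sums gives
\begin{equation*}
\sum_{n\ge 1}\frac{Q_k(q^n)}{(1+q^n+q^{2n})^k}=\sum_{\ell=1}^{k}c_{Q_k}(\ell)\sum_{n,m\ge 1}m^{\ell-1}q^{nm}\bigl(\z_3^{m}+(-1)^{\ell}\z_3^{-m}\bigr).
\end{equation*}
For odd $\ell$ one has $\z_3^{m}-\z_3^{-m}=i\sqrt{3}\,\chi_{3,2}(m)$, so the inner double sum is $i\sqrt{3}\,F_{\ell}(\chi_{3,2};\tau)$; for even $\ell$ one has $\z_3^{m}+\z_3^{-m}=3\,\d_{3\mid m}-1$, so it is $3^{\ell}F_{\ell}(3\tau)-F_{\ell}(\tau)$, with $F_{\ell}$ as in \eqref{eq:Fk_definition}. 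Replacing each $F$ by the corresponding Eisenstein series $G$ (adding back the relevant constant Fourier coefficients from Lemmas~\ref{lem:eis-with-char-kge3} and~\ref{lem:eis-with-char-k=1,2} and from \eqref{eq:Eis-k-even}--\eqref{eq:G2_fourier_expansion}) produces the right-hand side of \eqref{eq:Qk_quasipolynomial_a_1} up to the constant term. To pin that constant, I would evaluate the re-expanded $F$ at $x=1$ via the Frobenius evaluation \eqref{eq:Eulerian_polynomial_zeta_relation} with $r=3$ (using $\z_3^{-1}=\z_3^{2}$) together with $L(\chi_{3,2},1-\ell)=3^{\ell-1}\bigl(\z(1-\ell,\tfrac13)-\z(1-\ell,\tfrac23)\bigr)$ and $\sum_{a=1}^{3}\z(s,a/3)=3^{s}\z(s)$; this shows that $F(1)=Q_k(1)/3^{k}$ equals twice the total of the constant Fourier coefficients just restored, so the constant must be $-Q_k(1)/(2\cdot 3^{k})$. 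Finally, $G_{\ell}(\chi_{3,2};\tau)\in M_{\ell}(\Gamma_0(3),\chi_{3,2})$ for every odd $\ell\ge 1$, while $3^{\ell}G_{\ell}(3\tau)-G_{\ell}(\tau)$ lies in $M_{\ell}(\Gamma_0(3))$ for even $\ell\ge 4$ and is quasimodular of weight $2$ and level $3$ for $\ell=2$ (using the operator $V_3$ from \eqref{eq:V-op} and the transformation law of $G_2(\tau)$); since every weight occurring is $\le k$, the final assertion follows.

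I expect the two delicate points to be the symmetry step and the constant-term identification. Of these, matching the constant to $-Q_k(1)/(2\cdot 3^{k})$ will be the harder, since it forces one to combine the Frobenius formula \eqref{eq:Eulerian_polynomial_zeta_relation} with the precise normalizations of $L(\chi_{3,2},1-\ell)$, $\z(1-\ell)$, and the Hurwitz zeta function; by contrast, the partial fractions, the Stirling-number reindexing that defines $c_{Q_k}(\ell)$, and the $\z_3^{m}\pm\z_3^{-m}$ identities are routine.
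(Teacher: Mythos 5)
Your proposal is correct and follows essentially the same route as the paper's proof: the same partial-fraction-plus-Stirling reindexing into Eulerian-weighted fractions, the same $x\mapsto 1/x$ symmetry and uniqueness argument giving $c'_{Q_k}(\ell)=(-1)^\ell c_{Q_k}(\ell)$ (including the $\ell=1$ constant subtlety), the same identities $\z_3^m-\z_3^{-m}=i\sqrt{3}\,\chi_{3,2}(m)$ and $\z_3^m+\z_3^{-m}=3\d_{3\mid m}-1$ to produce the Eisenstein series, and the same evaluation at $x=1$ via \eqref{eq:Eulerian_polynomial_zeta_relation} to pin the constant $-Q_k(1)/(2\cdot 3^k)$. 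The only deviations are cosmetic (e.g.\ calling \eqref{eq:eulerian_decomposition} ``Frobenius' identity'' rather than its inversion), so no changes are needed.
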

\begin{proof}
Starting with the decomposition of $\frac{Q_k(x)}{(1+x+x^2)^k}$ in the theorem statement and applying \eqref{eq:eulerian_decomposition}, we find
\begin{equation*}
\frac{Q_k(x)}{(1+x+x^2)^k}
=
\sum_{r=1}^k \frac{a_{Q_k}\!(r)}{(r-1)!} \sum_{\ell = 1}^r \St{r-1 \\ \ell-1} 
\frac{\z_3 x \, P_{\ell-1} (\z_3 x)}{(1-\z_3 x)^\ell}
+ \sum_{r=1}^k \frac{a'_{Q_k}\!(r)}{(r-1)!} \sum_{\ell = 1}^r \St{r-1 \\ \ell-1} 
\frac{\z^{-1}_3 x \, P_{\ell-1} (\z^{-1}_3 x)}{(1-\z^{-1}_3 x)^\ell}.
\end{equation*}
Changing the order of the summation, we rewrite this as
\begin{equation}\label{eq:Qk_partial_fraction_eulerian_decomposition}
\frac{Q_k(x)}{(1+x+x^2)^k}
=
\sum_{\ell = 1}^k c_{Q_k}\!(\ell)
\frac{\z_3 x \, P_{\ell-1} (\z_3 x)}{(1-\z_3 x)^\ell}
+ \sum_{\ell = 1}^k c'_{Q_k}\!(\ell)
\frac{\z^{-1}_3 x \, P_{\ell-1} (\z^{-1}_3 x)}{(1-\z^{-1}_3 x)^\ell},
\end{equation}
where $c'_{Q_k}\!(\ell)$ is defined similar to $c_{Q_k}\!(\ell)$ with $a_{Q_k}\!(r)$ replaced by $a'_{Q_k}\!(r)$. 
Now changing $x \mapsto 1/x$ leaves the left hand side invariant thanks to our assumption on $Q_k$ and hence, we find
\begin{equation*}
\frac{Q_k(x)}{(1+x+x^2)^k}
=
\sum_{\ell = 1}^k (-1)^\ell c_{Q_k}\!(\ell)
\frac{(\z_3^{-1} x)^{\ell-1} \, P_{\ell-1} \!\lp \frac{1}{\z_3^{-1} x} \rp}{(1-\z_3^{-1} x)^\ell}
+ \sum_{\ell = 1}^k (-1)^\ell c'_{Q_k}\!(\ell)
\frac{(\z_3 x)^{\ell-1} \, P_{\ell-1} \!\lp \frac{1}{\z_3 x} \rp}{(1-\z_3 x)^\ell} .
\end{equation*}
For $\ell \geq 2$, we can use the symmetry relation \eqref{eq:Eulerian_polynomial_symmetry} to write
\begin{equation*}
\lp \z_3^{\pm 1} x \rp^{\ell -2} P_{\ell-1} \!\lp \frac{1}{\z_3^{\pm 1} x} \rp
= P_{\ell-1} (\z_3^{\pm 1} x) .
\end{equation*}
For $\ell = 1$, on the other hand, we have $P_0 (t) = 1$ and hence 
\begin{equation*}
\frac{(\z_3^{\pm 1} x)^{\ell-1} \, P_{\ell-1} \!\lp \frac{1}{\z_3^{\pm 1} x} \rp}{(1-\z_3^{\pm 1} x)^\ell}
=
\frac{1}{1-\z_3^{\pm 1} x} 
= 1 + \frac{\z^{\pm 1}_3 x \, P_{\ell-1} (\z^{\pm 1}_3 x)}{(1-\z^{\pm 1}_3 x)^\ell}.
\end{equation*}
Thus overall we find
\begin{equation*}
\frac{Q_k(x)}{(1+x+x^2)^k}
=
-(c_{Q_k}\!(1) +c'_{Q_k}\!(1))
+
\sum_{\ell = 1}^k (-1)^\ell c'_{Q_k}\!(\ell)
\frac{\z_3 x \, P_{\ell-1} (\z_3 x)}{(1-\z_3 x)^\ell}
+
\sum_{\ell = 1}^k (-1)^\ell c_{Q_k}\!(\ell)
\frac{\z^{-1}_3 x \, P_{\ell-1} (\z^{-1}_3 x)}{(1-\z^{-1}_3 x)^\ell} .
\end{equation*}
The decomposition of the left hand side to such fractions is unique\footnote{
This can be seen e.g.~ by multiplying with $(1-\z_3^{\pm 1} x)^n$ and setting $x = \z_3^{\mp 1}$ for $n$ equal to $k,k-1,\ldots,1$ successively and noting that $P_{\ell-1} (1) = (\ell-1)!$ by equation \eqref{eq:Eulerian_polynomial_tm1_decomposition}.} 
and hence comparing with \eqref{eq:Qk_partial_fraction_eulerian_decomposition} we conclude that
\begin{equation*}
c'_{Q_k}\!(\ell) = (-1)^\ell \, c_{Q_k}\!(\ell) \quad \mbox{for all } \ell \in \{1,\ldots,k\}.
\end{equation*}
Therefore, we have
\begin{multline}\label{eq:Qk_partial_fraction_eulerian_decomposition_symmetric}
\frac{Q_k(x)}{(1+x+x^2)^k}
=
\sum_{\substack{\ell=1 \\ \ell: \, \mathrm{odd}}}^k c_{Q_k}\!(\ell)
\lp \frac{\z_3 x \, P_{\ell-1} (\z_3 x)}{(1-\z_3 x)^\ell}
- \frac{\z^{-1}_3 x \, P_{\ell-1} (\z^{-1}_3 x)}{(1-\z^{-1}_3 x)^\ell} \rp
\\
+
\sum_{\substack{\ell=1 \\ \ell: \, \mathrm{even}}}^k c_{Q_k}\!(\ell)
\lp \frac{\z_3 x \, P_{\ell-1} (\z_3 x)}{(1-\z_3 x)^\ell}
+ \frac{\z^{-1}_3 x \, P_{\ell-1} (\z^{-1}_3 x)}{(1-\z^{-1}_3 x)^\ell} \rp .
\end{multline}
Setting $x=q^n$ and summing over $n \in \IN$, we can use equation \eqref{eq:Eulerian_polynomial_definition} to evaluate
\begin{equation*}
\sum_{n \geq 1} \frac{Q_k (q^n)}{(1+q^n+q^{2n})^k}
=
\sum_{\substack{\ell=1 \\ \ell: \, \mathrm{odd}}}^k c_{Q_k}\!(\ell) \!
\sum_{n,m \geq 1} (\z_3^m - \z_3^{-m}) m^{\ell -1} q^{mn}
+
\sum_{\substack{\ell=1 \\ \ell: \, \mathrm{even}}}^k c_{Q_k}\!(\ell) \!
\sum_{n,m \geq 1} (\z_3^m + \z_3^{-m}) m^{\ell -1} q^{mn} .
\end{equation*}
Noting the identities
\begin{equation}\label{eq:zeta3_decomposition_characters}
\z_3^m - \z_3^{-m} = i \sqrt{3} \chi_{3,2} (m)
\andd
\z_3^m + \z_3^{-m} = 3 \d_{3 \mid m} -1 ,
\end{equation}
the sum over $n,m$ can be evaluated to $i \sqrt{3}  F_\ell (\chi_{3,2}; \t)$ for the first sum and to $3^\ell F_\ell (3 \t) - F_\ell (\t)$ for the second sum thanks to \eqref{eq:Fk_definition}.
Restoring the constant Fourier coefficients yields
\begin{equation*}
\sum_{n \geq 1} \frac{Q_k (q^n)}{(1+q^n+q^{2n})^k}
= 
i \sqrt{3} \sum_{\substack{\ell=1 \\ \ell: \, \mathrm{odd}}}^k 
c_{Q_k}\!(\ell) \, G_\ell (\chi_{3,2}; \t) 
+
\sum_{\substack{\ell=1 \\ \ell: \, \mathrm{even}}}^k 
c_{Q_k}\!(\ell) \! \lp 3^\ell G_\ell (3 \t)  - G_\ell (\t) \rp 
+
\CC_{Q_k} ,
\end{equation*}
where
\begin{equation*}
\CC_{Q_k} \! := \! - \frac{i \sqrt{3}}{2} \!
\sum_{\substack{\ell=1 \\ \ell: \, \mathrm{odd}}}^k \! c_{Q_k}\!(\ell) \, L(\chi_{3,2},1-\ell)
- \frac{1}{2} \! \sum_{\substack{\ell=1 \\ \ell: \, \mathrm{even}}}^k  \!
c_{Q_k}\!(\ell) \lp 3^\ell - 1 \rp \z (1-\ell)  . 
\end{equation*}
We find $\CC_{Q_k} = - \frac{Q_k (1)}{2 \cdot 3^k}$ by evaluating \eqref{eq:Qk_partial_fraction_eulerian_decomposition_symmetric} at $x=1$ and using \eqref{eq:Eulerian_polynomial_zeta_relation} with $r=3$, $n \in \{1,2\}$ and noting~\eqref{eq:zeta3_decomposition_characters}. 
The theorem statement then follows by noting that $G_\ell (\chi_{3,2}; \t)$ is in $M_{\ell}(\Gamma_0(3),\chi_{3,2})$ for $\ell \geq 1$ odd and $3^\ell G_\ell (3 \t) - G_\ell (\t)$ is a weight $\ell$ modular form for $\ell > 2$ even (and quasimodular for $\ell=2$) on $\Gamma_0 (3)$ with trivial character.
\end{proof}

\begin{remarks}\hfill\break
(1) Note that \smash{$c_{Q_k}\!(k) = \frac{1}{(k-1)!} a_{Q_k}\!(k)$} according to the definition of $c_{Q_k}\!(\ell)$'s and \eqref{eq:unsigned_stirling_first_kind_particular_values}. We then compute
\begin{equation*}
a_{Q_k}\!(k) 
= \lim_{x \to \z_3^{-1}} \lb (1-\z_3 x)^k \frac{Q_k(x)}{(1+x+x^2)^k} \rb 
= \frac{Q_k (\z_3^{-1})}{(1-\z_3^{-2})^k} 
\ \ \mbox{ and hence }
c_{Q_k}\!(k) = \frac{\z_3^k Q_k (\z_3^{-1})}{(k-1)! (i \sqrt{3})^k}.
\end{equation*}
Therefore, the contribution of the weight $k$ piece in Theorem \ref{thm:Qk_quasipolynomial_a_1} is nontrivial if and only if $Q_k (\z_3^{-1}) \neq 0$. 
\\
(2) Thanks to \eqref{eq:unsigned_stirling_first_kind_particular_values}, 
the coefficient of the only quasimodular contribution to \eqref{eq:Qk_quasipolynomial_a_1} is 
\begin{equation*}
c_{Q_k}\!(2) = \sum_{r=2}^k \frac{a_{Q_k}\!(r)}{r-1} \  \mathrm{for}\  k \geq 2.
\end{equation*}
\end{remarks}

\subsection{The Case $a=-1$} Just like the $a=1$ case, we give a more general result that allows us to prove the quasimodularity of the combinations $\mathcal{U}_{k,r}(1;q)+\mathcal{U}_{k,2k-r}(1;q)$ with the case of $\mathcal{U}_{k,k}(-1;q)$ given by setting \smash{$Q_k(x)=x^k$} in the theorem statement.

\begin{theorem}\label{thm:Qk_quasipolynomial_a_m1}
Let $k \in \IN$ and $Q_k$ be a polynomial of degree $\leq 2k-1$ satisfying $Q_k (x) = x^{2k} Q_k(1/x)$ and such that $Q_k (0) = 0$. Then we have
\begin{multline}\label{eq:Qk_quasipolynomial_a_m1}
\sum_{n \geq 1} \frac{Q_k (q^n)}{(1-q^n+q^{2n})^k}
= 
- \frac{Q_k (1)}{2}
+
i \sqrt{3} \sum_{\substack{\ell=1 \\ \ell: \, \mathrm{odd}}}^k 
d_{Q_k}\!(\ell) \, 
\lp 2^\ell G_\ell (\chi_{3,2}; 2\t)  + G_\ell (\chi_{3,2}; \t) \rp
\\
+
\sum_{\substack{\ell=1 \\ \ell: \, \mathrm{even}}}^k 
d_{Q_k}\!(\ell) \,  
\lp 6^\ell G_\ell (6 \t)- 3^\ell G_\ell (3 \t) 
- 2^\ell G_\ell (2 \t) + G_\ell (\t) \rp ,
\end{multline}
where
\begin{equation*}
d_{Q_k}\!(\ell) \,  := \sum_{r=\ell}^k \frac{b_{Q_k}\!(r)}{(r-1)!} \St{r-1 \\ \ell -1}
\end{equation*}
with $b_{Q_k}\!(r)$ coming from the decomposition 
\begin{equation*}
\frac{Q_k(x)}{(1-x+x^2)^k} =
\sum_{r=1}^k b_{Q_k}\!(r) \frac{\z_6 x}{(1-\z_6 x)^r}
+ \sum_{r=1}^k b'_{Q_k}\!(r) \frac{\z^{-1}_6 x}{(1-\z^{-1}_6 x)^r} .
\end{equation*}
In particular, \eqref{eq:Qk_quasipolynomial_a_m1} is a linear combination of modular forms (and quasimodular for weight $2$) with level $6$ and highest weight $k$.
\end{theorem}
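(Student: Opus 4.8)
The plan is to follow the proof of Theorem~\ref{thm:Qk_quasipolynomial_a_1} almost line for line, with the primitive cube root of unity $\z_3$ replaced by the primitive sixth root of unity $\z_6$, exploiting $1-x+x^2=(1-\z_6 x)(1-\z_6^{-1}x)$. First I would start from the partial fraction decomposition of $\frac{Q_k(x)}{(1-x+x^2)^k}$ recorded in the statement (it exists since $\deg Q_k\leq 2k-1$ and $Q_k(0)=0$ exclude any polynomial or constant part), apply the Eulerian inversion identity~\eqref{eq:eulerian_decomposition} to each summand $\frac{\z_6^{\pm1}x}{(1-\z_6^{\pm1}x)^r}$, and interchange the order of summation to reach the analog of~\eqref{eq:Qk_partial_fraction_eulerian_decomposition}, with coefficients $d_{Q_k}(\ell)$ and $d'_{Q_k}(\ell)$ multiplying $\frac{\z_6^{\pm1}x\,P_{\ell-1}(\z_6^{\pm1}x)}{(1-\z_6^{\pm1}x)^\ell}$. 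Then I would invoke $Q_k(x)=x^{2k}Q_k(1/x)$: since $x\mapsto 1/x$ fixes the left-hand side, using the Eulerian symmetry~\eqref{eq:Eulerian_polynomial_symmetry} (with the $\ell=1$ terms peeled off, yielding an extra constant which, by uniqueness of the decomposition, must vanish) forces $d'_{Q_k}(\ell)=(-1)^\ell d_{Q_k}(\ell)$ for all $\ell$. This yields the symmetric/antisymmetric form of~\eqref{eq:Qk_partial_fraction_eulerian_decomposition_symmetric}, pairing the $\z_6$ and $\z_6^{-1}$ terms with a minus sign for odd $\ell$ and a plus sign for even $\ell$.

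Next I would set $x=q^n$, sum over $n\geq 1$, and apply~\eqref{eq:Eulerian_polynomial_definition} to collapse the right-hand side into the double sums $\sum_{n,m\geq 1}(\z_6^m\mp\z_6^{-m})m^{\ell-1}q^{mn}$. The fresh ingredient here---the analog of~\eqref{eq:zeta3_decomposition_characters}---is that, as functions of $m$,
\begin{equation*}
\z_6^m - \z_6^{-m} = i\sqrt{3}\lp \chi_{3,2}(m) + 2\,\d_{2 \mid m}\,\chi_{3,2}(m/2) \rp
\andd
\z_6^m + \z_6^{-m} = 6\,\d_{6 \mid m} - 3\,\d_{3 \mid m} - 2\,\d_{2 \mid m} + 1 ,
\end{equation*}
which one checks by evaluating both sides on the residue classes modulo $6$. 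Splitting the double sums according to these identities and re-indexing ($m\mapsto 2m$, $m\mapsto 3m$, $m\mapsto 6m$ in the appropriate pieces) turns the antisymmetric part into $i\sqrt3\lp 2^\ell F_\ell(\chi_{3,2};2\t)+F_\ell(\chi_{3,2};\t)\rp$ and the symmetric part into $6^\ell F_\ell(6\t)-3^\ell F_\ell(3\t)-2^\ell F_\ell(2\t)+F_\ell(\t)$ via~\eqref{eq:Fk_definition}. Restoring the constant Fourier coefficients using Lemmas~\ref{lem:eis-with-char-kge3} and~\ref{lem:eis-with-char-k=1,2} together with~\eqref{eq:Eis-k-even}--\eqref{eq:G2_fourier_expansion} then produces~\eqref{eq:Qk_quasipolynomial_a_m1} up to the explicit constant
\begin{equation*}
\CC_{Q_k} = -\frac{i\sqrt3}{2}\sum_{\substack{\ell=1 \\ \ell:\,\mathrm{odd}}}^{k} d_{Q_k}(\ell)\,(2^\ell+1)\,L(\chi_{3,2},1-\ell) \;-\; \frac12\sum_{\substack{\ell=1 \\ \ell:\,\mathrm{even}}}^{k} d_{Q_k}(\ell)\,(6^\ell-3^\ell-2^\ell+1)\,\z(1-\ell) .
\end{equation*}

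To identify $\CC_{Q_k}$ I would substitute $x=1$ into the symmetric decomposition and use~\eqref{eq:Eulerian_polynomial_zeta_relation} with $r=6$ and $n\in\{1,5\}$, combined with the Hurwitz distribution relation $\sum_{j=0}^{N-1}\z(s,(a+j)/N)=N^s\z(s,a)$ to rewrite the resulting Hurwitz zeta values in terms of $L(\chi_{3,2},1-\ell)$ and $\z(1-\ell)$; this collapses the right-hand side at $x=1$ to exactly $-2\,\CC_{Q_k}$, while the left-hand side is $\frac{Q_k(1)}{(1-1+1)^k}=Q_k(1)$, whence $\CC_{Q_k}=-\tfrac{Q_k(1)}{2}$ as claimed. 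I would then conclude with the modularity statement: $G_\ell(\chi_{3,2};\t)\in M_\ell(\Gamma_0(3),\chi_{3,2})$ for every odd $\ell\geq 1$ by Lemmas~\ref{lem:eis-with-char-kge3} and~\ref{lem:eis-with-char-k=1,2}, so $G_\ell(\chi_{3,2};2\t)=V_2\!\lp G_\ell(\chi_{3,2};\cdot)\rp\in M_\ell(\Gamma_0(6),\chi_{3,2})$ by~\eqref{eq:V-op}; and for even $\ell$ each $G_\ell(d\t)=V_d(G_\ell)$ with $d\mid 6$ is (quasi)modular of level dividing $6$ (modular for $\ell\geq 4$, quasimodular for $\ell=2$). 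Hence every term on the right of~\eqref{eq:Qk_quasipolynomial_a_m1} is a (quasi)modular form on $\Gamma_0(6)$ of weight $\ell\leq k$, quasimodularity entering only through the weight-$2$ combination $36\,G_2(6\t)-9\,G_2(3\t)-4\,G_2(2\t)+G_2(\t)$; moreover, as in the $a=1$ case the weight-$k$ coefficient equals $d_{Q_k}(k)=b_{Q_k}(k)/(k-1)!$, nonzero precisely when $Q_k(\z_6^{-1})\neq 0$ (in particular for $Q_k(x)=x^k$, which gives Theorem~\ref{thm:weight-and-level-of-U}(1) for $\mathcal{U}_{k,k}(-1;q)$).

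I expect the main obstacle to be the evaluation of $\CC_{Q_k}$: the $r=6$ instance of~\eqref{eq:Eulerian_polynomial_zeta_relation} produces six Hurwitz zeta values in each weight, and disentangling them---through the distribution relations---into the clean combination of $L(\chi_{3,2},\cdot)$ and $\z(\cdot)$ matching the displayed $\CC_{Q_k}$ requires careful bookkeeping. This, together with the appearance of the extra even-divisibility contributions ($\d_{2\mid m}$, $\d_{6\mid m}$, and the $\chi_{3,2}(m/2)$ term) in the above root-of-unity identities, is precisely what forces the level to climb from $3$ (in the $a=1$ case) to $6$.
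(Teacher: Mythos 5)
Your proposal is correct and follows essentially the same route as the paper: the paper's proof simply reruns the argument of Theorem~\ref{thm:Qk_quasipolynomial_a_1} with $\z_3$ replaced by $\z_6$, and its only new ingredient is exactly the pair of identities you state, $\z_6^m-\z_6^{-m}=i\sqrt{3}\lp 2\d_{2\mid m}\chi_{3,2}(m/2)+\chi_{3,2}(m)\rp$ and $\z_6^m+\z_6^{-m}=6\d_{6\mid m}-3\d_{3\mid m}-2\d_{2\mid m}+1$. Your handling of the constant term (evaluation at $x=1$ via \eqref{eq:Eulerian_polynomial_zeta_relation} with $r=6$, $n\in\{1,5\}$) is the same bookkeeping the paper implicitly relies on, so there is nothing to add.
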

\begin{proof}
We follow the same strategy as in the proof of Theorem \ref{thm:Qk_quasipolynomial_a_1}. The crucial point here is the decomposition of the function $m \mapsto \z_N^m \pm \z_N^{-m}$ for $m \in \IZ$ to a linear combination of $m \mapsto \d_{N_1 \mid m} \, \chi (m/N_1)$, where $\chi$ is a primitive Dirichlet character modulo $N_2$ with $N_1 N_2 \mid N$. In the case $N=6$ relevant here, this reads
\begin{equation*}
\z_6^m - \z_6^{-m} = i \sqrt{3} \lp  2 \d_{2 \mid m} \, \chi_{3,2} (m/2) + \chi_{3,2} (m) \rp 
\andd
\z_6^m + \z_6^{-m} = 6 \d_{6 \mid m} - 3 \d_{3 \mid m} - 2 \d_{2 \mid m}  + 1 .
\end{equation*}
The rest of the proof is the same as that of Theorem \ref{thm:Qk_quasipolynomial_a_1}.
\end{proof}
\begin{remark}
We have
\begin{equation*}
d_{Q_k}\!(k) = \frac{\z_6^k Q_k (\z_6^{-1})}{(k-1)! (i \sqrt{3})^k},
\end{equation*}
so the contribution of the weight $k$ piece in Theorem \ref{thm:Qk_quasipolynomial_a_m1} is nontrivial if and only if $Q_k (\z_6^{-1}) \neq 0$.
\end{remark}

\section{Limiting behavior of $\mathcal{U}_{t,k,r}$}\label{sec:limiting_Utkr}
In this section, we prove Theorem \ref{thm:lim-behaviour} by establishing it as a special case of a more general result. 
To state it, we define for $t \in \IN$, $k \in \IZ$ and polynomials
$P(x) \in x \, \N_0[x]$ of degree $m \in \N$ and $Q(x) \in x \, \Z[x]$ of degree $s \in \N$ the $q$-series
\begin{equation*}
\mathcal{U}_{t,k}(P,Q;q):= \sum_{1\le n_1<n_2<\cdots<n_t} \frac{q^{P(n_1)+P(n_2)+\cdots+P(n_t)}}{(1+Q(q^{n_1}))^k(1+Q(q^{n_2}))^k\cdots (1+Q(q^{n_t}))^k}.
\end{equation*} 

\begin{theorem}\label{General_limit}
Assuming the notations above, we have
\begin{align*}
q^{-\sum_{j=1}^t \! P(j)}\, \mathcal{U}_{t,k}(P,Q;q)
=\begin{cases}  
\prod_{n\geq 1}\frac{1}{(1-q^{\alpha n})(1+Q(q^n))^k}+O(q^{t+1}) \quad &{\text {\rm if $m= 1$ and $P(x) = \a x$}},\\\\
\prod_{n\geq 1}\frac{1}{(1+Q(q^n))^k}+O(q^{t+1}) \quad &{\text {\rm if $m\geq 2$}}.
\end{cases}
\end{align*}
\end{theorem}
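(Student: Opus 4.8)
The plan is to analyze the $q$-expansion of each summand in $\mathcal{U}_{t,k}(P,Q;q)$ and determine which contributions survive modulo $q^{N+1}$ for a suitable $N$. Fix $1\le n_1<n_2<\cdots<n_t$. Since $P(x)\in x\,\N_0[x]$ has nonnegative coefficients and $P(0)=0$, the exponent $P(n_1)+\cdots+P(n_t)$ is an increasing function of each $n_i$, so among all strictly increasing tuples the minimal value of $\sum_j P(n_j)$ is attained uniquely at $(n_1,\dots,n_t)=(1,2,\dots,t)$, giving the leading exponent $\sum_{j=1}^t P(j)$. First I would make this quantitative: I would show that if $(n_1,\dots,n_t)\neq(1,\dots,t)$, then $\sum_j P(n_j)\ \ge\ \sum_{j=1}^t P(j) + c$, where $c$ depends on the ``defect'' of the tuple, and crucially that $c$ grows with how far the tuple is from $(1,\dots,t)$. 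The precise bound needed is $\sum_j P(n_j)-\sum_j P(j)\ge \big(\sum_j n_j\big)-\big(\sum_j j\big)$ when $\deg P=1$ with $P(x)=\a x$ (trivially equality up to the factor $\a$, so the defect is $\a$ times the defect in the $n_j$'s), and $\sum_j P(n_j)-\sum_j P(j)\ge 2\big(\sum_j n_j - \sum_j j\big)$ (or better) when $\deg P\ge 2$, using that $P(n+1)-P(n)$ is increasing and at least $2$ for the quadratic-or-higher case. This ``gap lemma'' is the combinatorial heart of the argument.

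Next I would expand the denominators. Since $Q(x)\in x\,\Z[x]$, we have $1+Q(q^{n_i}) = 1 + O(q^{n_i})$, so $(1+Q(q^{n_i}))^{-k}\in\Z[[q^{n_i}]]$ with constant term $1$; write $(1+Q(q^{n_i}))^{-k}=1+R(q^{n_i})$ where $R(x)\in x\,\Z[[x]]$. Then a single summand equals
\begin{equation*}
q^{P(n_1)+\cdots+P(n_t)}\prod_{i=1}^t\big(1+R(q^{n_i})\big),
\end{equation*}
whose lowest-order term is $q^{P(n_1)+\cdots+P(n_t)}$. By the gap lemma, after multiplying the whole series by $q^{-\sum_{j=1}^t P(j)}$, any summand with $(n_1,\dots,n_t)\neq(1,\dots,t)$ contributes only to powers $q^\ell$ with $\ell\ge t+1$ — provided I choose the threshold correctly. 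The subtle point: even the summand indexed by $(1,2,\dots,t)$ contributes higher-order corrections through the factors $R(q^{n_i})$, and similarly tuples that differ from $(1,\dots,t)$ only in the largest few indices contribute at orders possibly $\le t$ in the $\deg P=1$ case. So I would not try to isolate a single summand; instead I would sum over the relevant tuples. Concretely, for $m\ge 2$ every tuple other than $(1,\dots,t)$ has exponent at least $\sum_j P(j) + t+1$ (since the defect in $\sum_j n_j$ is at least $1$ and gets multiplied by at least $2$, and one checks the worst case $(1,2,\dots,t-1,t+1)$ gives defect exactly... — here I'd verify $P(t+1)-P(t)\ge t+1$ for $\deg P\ge 2$, e.g. $P(x)\ge x^2$ gives $P(t+1)-P(t)\ge 2t+1\ge t+1$), so modulo $q^{t+1}$ only the tuple $(1,\dots,t)$ survives, and on that tuple each $R(q^{n_i})=R(q^i)$ with $i\le t$ can contribute nontrivially; summing these is exactly $\prod_{i=1}^t(1+R(q^i))$, which agrees with $\prod_{n\ge1}(1+Q(q^n))^{-k}=\prod_{n\ge1}(1+R(q^n))$ up to $O(q^{t+1})$ since the omitted factors $n\ge t+1$ are $1+O(q^{t+1})$. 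For $m=1$, $P(x)=\a x$, the defect analysis shows that tuples with $\sum_j n_j \le \sum_j j + t$ can contribute, and these are precisely the tuples obtained from $(1,\dots,t)$ by ``pushing the top entries up'' — the generating function of such contributions reproduces the extra factor $\prod_{n\ge1}(1-q^{\a n})^{-1}$.

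The cleanest way to package the $m=1$ case is to recognize the sum as a known $q$-series manipulation: with $P(x)=\a x$,
\begin{equation*}
\mathcal{U}_{t,k}(\a x,Q;q)=\sum_{1\le n_1<\cdots<n_t}\prod_{i=1}^t \frac{q^{\a n_i}}{(1+Q(q^{n_i}))^k},
\end{equation*}
and after the substitution $n_i = i + (m_i+m_{i+1}+\cdots+m_t)$ with $m_i\ge 0$ (the standard bijection between strictly increasing tuples and weakly... ), actually more simply set $n_i-n_{i-1}\ge1$; writing $g(x):=x/(1+Q(x))^k \in x+x^2\Z[[x]]$ one gets $\mathcal{U}_{t,k}(\a\cdot,Q;q)=\sum_{1\le n_1<\cdots<n_t}\prod g(q^{n_i})^{\text{(with }q^{\a}\text{ shift)}}$, and the limit $t\to\infty$ of $q^{-\a t(t+1)/2}\mathcal{U}_{t,k}$ is then a telescoping/Durfee-square type identity yielding $\prod_{n\ge1}\frac{1}{(1-q^{\a n})(1+Q(q^n))^k}$. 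I expect the \textbf{main obstacle} to be making the gap lemma and the truncation bookkeeping fully rigorous in the $m=1$ case: one must show simultaneously that (i) the surviving tuples modulo $q^{t+1}$ are exactly those within defect $t$, and (ii) their total contribution, including all the $R$-corrections, resums exactly to $\prod_{n\ge1}(1-q^{\a n})^{-1}\prod_{i\le t}(1+R(q^i))$ modulo $q^{t+1}$ — the second point is where a careful induction on $t$, or an appeal to a known ``MacMahon limit'' lemma in the style of \cite{AOS, Bringmann}, will be needed. The $m\ge 2$ case, by contrast, should be essentially immediate once the gap lemma is in hand.
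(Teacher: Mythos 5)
Your strategy---reduce everything to deviations of $(n_1,\dots,n_t)$ from the minimal tuple $(1,\dots,t)$, bound the resulting exponent defect, and track the corrections coming from expanding the denominators---is the same as the paper's, and your treatment of the case $m\ge 2$ is essentially complete: strict increase forces any nonzero deviation to bump the top entries, so the minimal nonzero defect is $P(t+1)-P(t)\ge (t+1)^2-t^2 \ge t+1$, only the tuple $(1,\dots,t)$ survives modulo $q^{t+1}$, and its summand is $\prod_{j\le t}(1+Q(q^j))^{-k}=\prod_{n\ge1}(1+Q(q^n))^{-k}+O(q^{t+1})$.

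The gap is in the $m=1$ case, and you flag it yourself: your point (ii)---that the defect-$\le t$ tuples together with all their $R$-corrections resum to $\prod_{n\ge1}(1-q^{\alpha n})^{-1}\prod_{n\ge 1}(1+Q(q^n))^{-k}$ modulo $q^{t+1}$---is essentially the assertion of the theorem, and the remedies you propose (a Durfee-square identity, an induction on $t$, or citing an external ``MacMahon limit'' lemma) are not a proof. What closes it is one uniform estimate that you almost state but never use: write $n_j=j+\mu_j$ with $0\le\mu_1\le\cdots\le\mu_t$, so a tuple corresponds to a partition whose $\ell$ nonzero parts necessarily occupy the top $\ell$ positions. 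Its denominator product then differs from that of the base tuple $(1,\dots,t)$ only in factors whose exponents are $\ge t-\ell+1$, so the ratio of the two products is $1+O(q^{t-\ell+1})$, while the monomial prefactor is $q^{\alpha(\mu_1+\cdots+\mu_t)}$ with $\mu_1+\cdots+\mu_t\ge\ell$. Hence modulo $q^{t+1}$ every summand's denominator product may be replaced by the common factor $\prod_{j\le t}(1+Q(q^j))^{-k}$, the whole sum factorizes as $\bigl(\sum_{v\le t}p(v)\,q^{\alpha v}\bigr)\prod_{j\le t}(1+Q(q^j))^{-k}$, and each factor agrees with the corresponding infinite product up to $O(q^{t+1})$ since $\alpha(t+1)\ge t+1$. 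This is exactly how the paper argues, phrased there by multiplying $q^{-\alpha t(t+1)/2}\,\mathcal{U}_{t,k}$ by $\prod_{n}(1-q^{\alpha n})(1+Q(q^n))^k$ and showing the result is $1+O(q^{t+1})$; no induction on $t$ and no external lemma are needed, and you should also make explicit the monotonicity fact (each $P(j+\mu)-P(j)$ is nondecreasing in $j$ and in $\mu$) that justifies taking the top-entry bump as the worst case in your gap lemma.
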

\begin{proof}
We first prove the case $m=1$ and $P(x)=\alpha x$. We have
(changing $n_j \mapsto n_j + j$ in $\mathcal{U}_{t,k}(P,Q;q)$)
\begin{align*}
		\mathfrak{U}_{t,k}(P,Q;q)&:=q^{-\alpha\frac{t(t+1)}{2}}
		\, \mathcal{U}_{t,k}(P,Q;q)\prod_{n=1}^\infty(1-q^{\alpha n})(1+Q(q^n))^k\\
		&= \prod_{n=1}^\infty(1-q^{\alpha n})
		\sum_{0\le n_1\leq n_2\leq\cdots\leq n_t}
		\frac{q^{\alpha(n_1+n_2+\cdots+n_t)}}{\prod_{j=1}^t(1+Q(q^{n_j+j}))^k}\prod_{r=1}^\infty(1+Q(q^r))^k.
\end{align*}
Then changing variables in the sum to $(\lambda_1,\lambda_2,\cdots,\lambda_t)=(n_t,n_{t-1},\cdots,n_1)$, we can turn it into a sum over integer partitions as
\begin{equation*}
\mathfrak{U}_{t,k}(P,Q;q) =\prod_{n=1}^\infty(1-q^{\alpha n})\sum_{v\geq 0}\sum_{\ell=0}^t\sum_{\lambda\in\mathcal{P}(v,\ell)}\frac{q^{\alpha(\lambda_1+\lambda_2+\cdots+\lambda_t)}}{\prod_{j=1}^t(1+Q(q^{\lambda_{j}+t-j+1}))^k}\prod_{r=1}^\infty(1+Q(q^r))^k,
\end{equation*}
where $\mathcal{P}(v,\ell)$ denotes the set of partitions of $v$ with $\ell$ parts.
Here we note that if $\lambda \in \mathcal{P}(v,\ell)$ and $j>\ell$, then $\lambda_j = 0$ by convention and we use this to rewrite
\begin{align*}
\mathfrak{U}_{t,k}(P,Q;q) 
&=\prod_{n=1}^\infty(1-q^{\alpha n})\sum_{v\geq 0}\sum_{\ell=0}^t\sum_{\lambda\in\mathcal{P}(v,\ell)}\frac{q^{\alpha(\lambda_1+\lambda_2+\cdots+\lambda_\ell)}}{\prod_{j=1}^\ell (1+Q(q^{\lambda_{j}+t-j+1}))^k}\prod_{r=t-\ell+1}^\infty(1+Q(q^r))^k
\\
&=
\prod_{n=1}^\infty(1-q^{\alpha n})\sum_{v\geq 0}\sum_{\ell=0}^t\sum_{\lambda\in\mathcal{P}(v,\ell)} 
q^{\alpha(\lambda_1+\lambda_2+\cdots+\lambda_\ell)}
(1+O(q^{t-\ell+1})) .
\end{align*}
Now we note that terms with $v \geq t+1$ do not contribute up to order $O(q^{t+1})$. Similarly, contributions of the error terms $q^{\alpha(\lambda_1+\lambda_2+\cdots+\lambda_\ell)} O(q^{t-\ell+1})$ are zero up to $O(q^{t+1})$ since $\alpha (\lambda_1+\lambda_2+\cdots+\lambda_\ell) \geq \ell$. So we have
\begin{equation*}
\mathfrak{U}_{t,k}(P,Q;q) 
= \prod_{n=1}^\infty(1-q^{\alpha n}) \sum_{v=0}^t \sum_{\ell=0}^t\sum_{\lambda\in\mathcal{P}(v,\ell)} q^{\a v} + O(q^{t+1})
=
\prod_{n=1}^\infty(1-q^{\alpha n}) \sum_{v=0}^t p(v) q^{\a v} + O(q^{t+1}) 
\end{equation*}
where $p(v)$ is the total number of partitions of $v$ (since a partition of $v \leq t$ has at most $t$ parts). The first term then gives $1+O(q^{t+1})$ and the case $m=1$ follows. 

In the case $m\ge 2$, we analogously consider
\begin{align*}
	\widetilde{\mathfrak{U}}_{t,k}(P,Q;q) &:=q^{-\sum_{j=1}^t \! P(j)}
		\, \mathcal{U}_{t,k}(P,Q;q)\prod_{n=1}^\infty(1+Q(q^n))^k.
\end{align*}
By exactly the same calculation as in the case $m=1$, we get
\begin{align*}
	\widetilde{\mathfrak{U}}_{t,k}(P,Q;q) &= 1 + \sum_{v\geq 1}\sum_{\ell=1}^t\sum_{\lambda\in\mathcal{P}(v,\ell)} 
q^{\sum_{j=1}^t \! \lrb{P(\lambda_{t-j+1}+j)-\! P(j)}}
(1+O(q^{t-\ell+1})) . 
\end{align*}
For any $\lambda\in\mathcal{P}(v,\ell)$ with $v\ge 1$, we have
\begin{align*}
	\sum_{j=1}^t \! \lrb{P(\lambda_{t-j+1}+j)-\! P(j)} & \ge \sum_{j=t-\ell+1}^t \lrb{P(1+j)-\! P(j)} \ge t+1,
\end{align*}
where the lower bound on the telescoping series follows from restricting to the leading term of $P$, which is a polynomial with nonnegative coefficients.
Hence, we have \smash{$\widetilde{\mathfrak{U}}_{t,k}(P,Q;q) = 1+O(q^{t+1})$},
which implies the result for $m\geq2$. 
\end{proof}
\begin{proof}[Proof of Theorem \ref{thm:lim-behaviour}]
The result follows by restricting $k$ to $\IN$ and letting $P(x)=rx$ and $Q(x)=ax+x^2$ in Theorem \ref{General_limit}.
\end{proof}

\section{Proof of Theorem \ref{Explicit_Identities}}\label{sec:explicit_identity}
\begin{proof}[Proof of Theorem~\ref{Explicit_Identities}]
	We first recall the Jacobi triple product identity 
	\begin{align}\label{JCT}
		\sum_{n=-\infty}^{\infty}q^{\frac{n(n+1)}{2}}z^{n}=\left(1+z^{-1}\right)(q;q)_{\infty}\prod_{n=1}^{\infty}(1+z^{-1}q^{n})(1+zq^{n}).
	\end{align}
We also recall the generating function of $\mathcal{U}_t(a;q)$ (with the convention $\mathcal{U}_0 := 1$) as (see Theorem~2.1 of \cite{AAT2})
	\begin{align}\label{Gen}
		\sum_{m=0}^\infty \mathcal{U}_m(a;q) \, x^{2m}
		=\prod_{n=1}^\infty\left(1+\frac{x^2 q^n}{1+aq^n+q^{2n}}\right).
	\end{align}
Combining \eqref{JCT}, \eqref{Gen} and letting $z+z^{-1}=x^2+a$, we find
\begin{align*}
\frac{\sum_{t=-\infty}^{\infty}q^{\frac{t(t+1)}{2}} z^t}{\prod_{n\geq 1}(1-q^n)(1+aq^n+q^{2n})}
=\left(1+z^{-1}\right)\sum_{m=0}^\infty \mathcal{U}_m(a;q)~(z+z^{-1}-a)^{m}.
\end{align*}
For the theta function on the left hand side we let $t \mapsto -t-1$ for the $t<0$ terms and on the right hand side we perform the binomial expansion for \smash{$(z+z^{-1}-a)^{m}$} (with variables \smash{$z+z^{-1}$} and $-a$) to obtain
\begin{equation*}
\frac{\sum_{t=0}^{\infty}q^{\frac{t(t+1)}{2}} \lp z^t + z^{-t-1} \rp}{\prod_{n\geq 1}(1-q^n)(1+aq^n+q^{2n})}
=(1+z^{-1})
\sum_{m=0}^\infty \mathcal{U}_m(a;q) \sum_{\g=0}^m
\binom{m}{\g} \lp z+z^{-1} \rp^\g (-a)^{m-\g} .
\end{equation*}
Then we use the identity 
\begin{equation*}
(1+z^{-1}) \lp z+z^{-1} \rp^\g 
= \sum_{\a=0}^\g \binom{\g}{\lfloor \frac{\g-\a}{2} \rfloor} 
\lp z^\a + z^{-\a-1} \rp 
\end{equation*}
and switch the order of the sum over $\a$ with the sums over $m$ and $\g$ to get
\begin{equation*}
\frac{\sum_{t=0}^{\infty}q^{\frac{t(t+1)}{2}} \lp z^t + z^{-t-1} \rp}{\prod_{n\geq 1}(1-q^n)(1+aq^n+q^{2n})}
=
\sum_{\a=0}^\infty \lp z^\a + z^{-\a-1} \rp 
\sum_{m=\a}^\infty \mathcal{U}_m(a;q) \sum_{\g=\a}^m \binom{m}{\g}
\binom{\g}{\lfloor \frac{\g-\a}{2} \rfloor}  (-a)^{m-\g} .
\end{equation*}
The theorem statement follows by comparing the coefficients of $z$ on both sides and letting $\g \mapsto m-\g$ for the sum over $\g$.
\end{proof}

\end{document}